\newcommand{\Rmnum}[1]{\expandafter\@slowromancap\romannumeral #1@}
\begin{document}

\newtheorem{theorem}{Theorem}
\newtheorem{cor}[theorem]{Corollary}
\newtheorem{guess}{Conjecture}
\newtheorem{claim}[theorem]{Claim}
\newtheorem{lemma}[theorem]{Lemma}

\makeatletter
  \newcommand\figcaption{\def\@captype{figure}\caption}
  \newcommand\tabcaption{\def\@captype{table}\caption}
\makeatother

\newtheorem{acknowledgement}[theorem]{Acknowledgement}

\newtheorem{question}{Question}

\newenvironment{proof}{\noindent {\bf
Proof.}}{\hfill$\blacksquare$\medskip}
\newcommand{\remark}{\medskip\par\noindent {\bf Remark.~~}}
\newcommand{\pp}{{\it p.}}
\newcommand{\de}{\em}
\newcommand{\mad}{\rm mad}

\newcommand{\qed}{\hfill$\blacksquare$\medskip}

\title{The Strongly Antimagic labelings of Double Spiders}

\author{
Fei-Huang Chang\thanks{Grant number: MOST 104-2115-M-003-008-MY2}\\
   Division of Preparatory Programs
   for  Overseas Chinese Students\\
   National Taiwan Normal University\\
	 New Taipei City, Taiwan\\
    {\tt cfh@ntnu.edu.tw}
    \and
    Pinhui Chin  \\
    Department of Mathematics\\
    Tamkang University\\
    New Taipei City, Taiwan\\
    {\tt encorex63447@gmail.com}
   \and
Wei-Tian Li\thanks{Grant number: MOST-105-2115-M-005-003-MY2}\\
   Department of Applied Mathematics\\
   National Chung-Hsing University\\
   Taichung City, Taiwan\\
   {\tt weitianli@nchu.edu.tw}
   \and
Zhishi Pan \thanks{Corresponding Author}\\
   Department of Mathematics\\
   Tamkang University\\
   New Taipei City, Taiwan\\
   {\tt zhishi.pan@gmail.com}
  }
\maketitle

\begin{abstract}
A graph $G=(V,E)$ is strongly antimagic, if there is a bijective mapping $f: E \to \{1,2,\ldots,|E|\}$
such that for any two vertices $u\neq v$, not only $\sum_{e \in E(u)}f(e) \ne \sum_{e\in E(v)}f(e)$
 and also $\sum_{e \in E(u)}f(e) < \sum_{e\in E(v)}f(e)$ whenever $\deg(u)< \deg(v) $,
where $E(u)$ is the set of edges incident to $u$.
In this paper, we prove that double spiders, the trees contains exactly two vertices of degree at least 3, are strongly antimagic.

\end{abstract}


\section{Introduction}

 Suppose  $G=(V,E)$ is a connected, finite, simple graph and   $f: E \to \{1,2,\ldots,
 |E|\}:=[|E|]$ is a bijection. For each vertex $u$ of $V$, let $E(u)$ be the set of edges incident to $u$, and the {\em vertex-sum} $\varphi_f$ at $u$ is defined as  $\varphi_f(u)=\sum_{e \in E(u)}f(e)$. The degree of $u$, denoted by $\deg(u)$, is the capacity of $E(u)$, i.e. $\deg(u)=|E(u)|$ and the leaf set is defined by $V_1=\{u|\deg(u)=1, u\in V\}$.
If $\varphi_f(u) \ne \varphi_f(v)$ for any two distinct vertices $u$ and $v$ of $V$, then $f$ is called an {\em antimagic labeling} of $G$.

The problem of finding antimagic labelings of graphs was introduced by Hartsfield and Ringel \cite{HR1990} in $1990$.
They proved that some special families of graphs, such as {\em paths, cycles, complete graphs}, are antimagic and put two conjectures. The conjectures have received much attention, but both conjectures
remain open.

\begin{guess} \cite{HR1990}\label{g1}
Every connected graph with order at least $3$ is antimagic.
\end{guess}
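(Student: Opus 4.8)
The plan is to attack Conjecture~\ref{g1} by a dichotomy on edge density, because the obstacles in the sparse and dense regimes are of completely different character and no single construction is known to handle both at once. In the \emph{dense} regime I would aim to show that any connected graph whose minimum degree is large enough (in the spirit of the threshold $\delta(G)\ge c\log|V|$ of Alon, Kaplan, Lev, Roditty, and Yuster, ideally pushed all the way down) admits an antimagic labeling. The natural tool here is the polynomial method: encode the bijectivity of $f$ together with the distinctness of all vertex-sums $\varphi_f(u)\ne\varphi_f(v)$ as the nonvanishing of a single polynomial in the edge variables, and force a nonzero coefficient on a suitable monomial via the Combinatorial Nullstellensatz. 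A cruder but more robust alternative is a greedy orientation argument: orient the edges so that each vertex absorbs enough of the large labels to separate its sum from those of its neighbors, exploiting the abundance of edges to create slack.

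For the \emph{sparse} regime the extremal and hardest objects are trees, and among trees the real difficulty concentrates on vertices of degree~$2$, whose two incident edges give very little freedom to distinguish vertex-sums. The strategy I would follow mirrors the structural inductions already successful for restricted tree families such as spiders, caterpillars, and the double spiders treated in this paper: root the tree, process it from the leaves inward, and assign the labels $1,\dots,|E|$ so that the partial sums at internal vertices stay in controlled, separated ranges. The order of operations would be: (i) since deleting a co-tree edge is unavailable, induct instead on the number of degree-$2$ vertices, contracting to a base family already known to be antimagic; (ii) reinsert the degree-$2$ vertices one at a time, each time rebalancing only a bounded set of nearby labels; and (iii) verify that the monotonicity $\deg(u)<\deg(v)\Rightarrow\varphi_f(u)<\varphi_f(v)$ is preserved, since many tree constructions in fact yield the stronger \emph{strongly} antimagic labelings studied here.

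The hard part---and the reason the conjecture remains open despite these partial results---is precisely the transition region between the two regimes and the absence of a uniform mechanism. In the dense case the Nullstellensatz coefficient is notoriously difficult to certify nonzero in general, and the greedy slack evaporates as the graph thins out; in the sparse case the degree-$2$ vertices resist every local rebalancing once they form long paths, which is why even the antimagicness of arbitrary trees is not fully settled. A decisive attack would therefore need either (a) an algebraic invariant certifying a nonzero coefficient for \emph{all} connected graphs of order at least $3$, independent of density, or (b) a structural decomposition that peels off a dense antimagic core and attaches the sparse pieces---trees and near-trees---without colliding vertex-sums at the interface. I expect step~(b), controlling the boundary between a labeled dense core and the appended sparse branches so that no two vertex-sums coincide, to be the main obstacle, and I do not expect a complete resolution to follow from the double-spider techniques of this paper alone.
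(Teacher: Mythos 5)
This statement is Conjecture~\ref{g1}, which the paper explicitly records as \emph{open}: the authors cite it from Hartsfield and Ringel, survey partial results (dense graphs, regular graphs, restricted tree families), and then prove only the much narrower Theorem~\ref{main thm} that double spiders are strongly antimagic. There is no proof of the conjecture in the paper for you to match, and your proposal does not supply one either. What you have written is a research programme, not an argument: every load-bearing step is deferred. In the dense regime you invoke the Combinatorial Nullstellensatz but never exhibit the polynomial, the target monomial, or a certification that its coefficient is nonzero --- and you yourself note this certification is ``notoriously difficult,'' which is precisely the unresolved content. The ``greedy orientation'' alternative is stated without any rule for choosing the orientation or any estimate showing the vertex-sums separate.

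The sparse-regime plan has a concrete structural flaw beyond incompleteness. Your step (i) contracts away degree-$2$ vertices to reach a base family, and step (ii) reinserts them ``one at a time, each time rebalancing only a bounded set of nearby labels.'' But contraction changes $|E|$, so a bijection onto $[|E|]$ for the contracted tree does not lift: reinserting one degree-$2$ vertex forces a global shift of the label set, not a bounded local rebalancing, and after the shift the sum at the new degree-$2$ vertex can collide with sums of leaves or of other degree-$2$ vertices anywhere in the tree. This is exactly why the paper's only inductive tool, Lemma~\ref{lm1}, goes in the opposite direction (appending a pendant edge at \emph{every} leaf simultaneously, which shifts all old labels by $|V_1|$ uniformly and uses the \emph{strong} property to order the new labels), and why even for the two-branch-vertex case of double spiders the authors must design explicit multi-step labeling rules with an appendix of verifications rather than any local-repair induction. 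Your step (iii) assumption that tree constructions ``in fact yield'' strongly antimagic labelings is also unjustified in general --- the paper poses precisely this as open Question~\ref{q1}. In short: the statement is a conjecture, the paper proves it only for double spiders, and your proposal is an honest but non-proof sketch whose central steps (nonzero Nullstellensatz coefficient; label-lifting through contraction) are the known obstructions, not resolved ones.
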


The most significant progress of Conjecture \ref{g1} is a result of Alon, Kaplan, Lev, Roditty, and
Yuster \cite{AKLRY2004}. They proved that a graph $G$ with minimum degree $\delta(G)\geq c\log |V|$ for a constant $c$ or with maximum degree $\Delta(G)\geq |V|-2$ is antimagic. They also proved that complete partite graph other than $K_2$ is antimagic.

Cranston \cite{Cra2009} proved that for $k \ge 2$, every $k$-regular bipartite graph
is antimagic. For non-bipartite regular graphs, B\'{e}rczi, Bern\'{a}th, Vizer \cite{BBV2015} and
Chang, Liang, Pan, Zhu \cite{CLPZ2016} proved independently that every regular graph is antimagic.

\begin{guess} \cite{HR1990}\label{g2}
Every  tree other than $K_2$ is antimagic.
\end{guess}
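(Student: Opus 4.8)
The plan is to attack the statement by induction on the tree, building a labeling incrementally while keeping enough control over the vertex-sums to avoid all collisions. Since a tree $T$ on $n\ge 3$ vertices has a leaf $v$, the most natural induction removes a pendant edge: set $T'=T-uv$, apply an antimagic labeling of $T'$ using the labels $[\,|E(T')|\,]$, and then assign the new largest label $|E(T)|$ to $uv$. This instantly makes $\varphi(v)=|E(T)|$ and only perturbs the single sum $\varphi(u)$, leaving every other vertex-sum distinct as before. The catch is that $\varphi(u)$ jumps to $(\text{old }\varphi(u))+|E(T)|$, which may now coincide with the previously large sum of some high-degree vertex $w$; so the genuine work is to re-balance the labels on the edges at $u$ (and possibly along the path from $u$ to $w$) so that no two vertex-sums collide. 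To make this re-balancing systematic I would root $T$ at a leaf and process the vertices in a fixed postorder, so that each already-labeled subtree interacts with the rest only through the single edge climbing toward the root.

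A cleaner alternative I would pursue in parallel is a global, level-based construction. Root $T$ at a leaf $r$, let $d(x)$ denote the distance from $r$, orient every edge from child to parent, and assign labels so that edges nearer the leaves receive smaller labels and edges nearer $r$ receive larger ones, breaking ties within a level by a left-to-right ordering of the vertices. The guiding principle is that $\varphi(x)$ is dominated by the label of the edge leading to $x$'s parent, which lies one level higher than all of $x$'s downward edges; this tends to sort vertex-sums by level, and within a level by the chosen ordering. Leaves then automatically receive the smallest sums (a single small label), and more generally the construction tends to sort sums by degree, which is stronger than mere antimagicness and connects directly to the strongly antimagic labelings studied in this paper.

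The hard part, and the reason the statement is in fact an open conjecture, is the treatment of vertices of degree $2$. Such a vertex contributes a sum of exactly two edge labels, and along a long path of degree-$2$ vertices these consecutive two-term sums must be pairwise distinct while also avoiding every sum at the branch vertices elsewhere in $T$. The postorder and level bookkeeping above controls sums well when degrees are large or when few vertices have degree $2$ --- indeed this is how spiders, caterpillars, and the double spiders of the present paper are settled --- but a general tree may contain arbitrarily long degree-$2$ segments interleaved with branch points, and no known assignment of labels tames every such configuration simultaneously. I would therefore expect the decisive ingredient to be a lemma showing that the labels on each maximal degree-$2$ path can be permuted, via a sliding argument on pairs of consecutive labels, to resolve any residual collision without disturbing the branch-vertex sums. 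Producing such a lemma in full generality is precisely the obstacle that has kept Conjecture~\ref{g2} open.
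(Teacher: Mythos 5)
You were asked to prove Conjecture~\ref{g2}, which is an open conjecture of Hartsfield and Ringel; the paper contains no proof of it and does not claim one. The paper only settles the special case of double spiders (Theorem~\ref{main thm}), and even there it must prove the \emph{stronger} property of being strongly antimagic in order to make any induction work. Your proposal, as you yourself concede in your final paragraph, is a plan rather than a proof: both of your constructions stop exactly where the difficulty begins, so the attempt has a genuine gap by its own admission, and that gap is not a fixable oversight but the open problem itself.

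Concretely: in your pendant-edge induction, after labeling $T'=T-uv$ antimagically with $[\,|E(T')|\,]$ and assigning $|E(T)|$ to $uv$, the sum $\varphi(u)$ jumps by $|E(T)|$ and may collide with a high-degree vertex; the ``re-balancing'' you defer cannot be done locally, because an antimagic labeling of $T'$ carries no slack --- permuting labels on edges at $u$ changes the sums at all neighbors of $u$ and cascades through the tree, destroying the inductive hypothesis. This is precisely why the paper's Lemma~\ref{lm1} uses a different and sound induction: it attaches a pendant edge to \emph{every} leaf simultaneously, and it requires the smaller graph to be strongly antimagic (sums ordered consistently with degrees) so that the new small labels $1,\dots,|V_1|$ can be threaded in without collisions; plain antimagicness, one leaf at a time, is too weak an invariant. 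Your second, level-based construction fails for the reason you name: two consecutive degree-$2$ vertices share an edge, so the two-term sums along a long subdivided path are forced into a narrow range, and the ``sliding lemma'' you invoke to permute labels on maximal degree-$2$ paths without disturbing branch sums is stated but not proved --- it \emph{is} the conjecture in disguise. The paper sidesteps this only because a double spider has exactly two branch vertices $v_l$ and $v_r$, which permits fully explicit global labeling rules (odd edges before pendant completions, with the largest labels reserved for edges at $v_l$, $v_r$, and the core path) verified by exhaustive case analysis in Lemmas~\ref{deqL equals 3}--\ref{R has even path}. Your self-assessment is accurate, but as a proof of the statement the attempt is incomplete, and necessarily so.
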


For Conjecture \ref{g2}, Kaplan, Lev, and Roditty \cite{KLR2009}
and Liang, Wong, and Zhu \cite{LWZ2012} showed that a tree with at most one vertex of degree $2$ is antimagic.
Recently, Shang \cite{S2015} proved that a special family of trees, spiders, is antimagic.
A {\em spider} is a tree formed from taking a set of disjoint paths and identifying one endpoint of each path together.
Huang, in his thesis~\cite{H2015}, also proved that spiders are antimagic.
Moreover, the antimagic labellings $f$ given in~\cite{H2015} have the property:  $\deg(u)<\deg(v)$ implies $\varphi_f(u)<\varphi_f(v)$.
Given a graph $G$, if there exists an  antimagic labeling $f$ satisfying the above property,
the $f$ is called a {\em strongly antimagic labeling} of $G$.
A graph $G$ is called {\em strongly antimagic}  if it has a strongly antimagic labeling.

Finding a strongly antimagic labeling on a graph $G$ enables us to find an antimagic labeling
of the supergraph of $G$.
Let us describe such inductive method in Lemma \ref{lm1}
which is extracted from the ideas in~\cite{H2015}.
For a graph $G$, let $V_k$ be the set of vertices of degree $k$ in $V(G)$.
Assume that $V_1=\{v_1,v_2,...,v_i\}$, then we define $G\bigoplus V_1'=(V(G)\cup V_1',E(G)\cup E')$,
where $V_1'=\{v_1',v_2',...,v_i'\}$ and $E'=\{v_1v_1',v_2v_2',...,v_iv_i'\}$.

\begin{lemma} \label{lm1}
  For any connected graph $G$ with $V_1\neq\varnothing$, if $G$ is strongly antimagic, then $G\bigoplus V_1'$ is strongly antimagic.
\end{lemma}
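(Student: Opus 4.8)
The plan is to construct a strongly antimagic labeling $g$ of $G\bigoplus V_1'$ directly from a given strongly antimagic labeling $f$ of $G$, by pushing every old label upward to free the $i:=|V_1|$ smallest labels for the new pendant edges (which, being incident to the new degree-$1$ vertices, must carry the smallest labels). Writing $m=|E(G)|$, I would first relabel the leaves so that $\varphi_f(v_1)<\varphi_f(v_2)<\cdots<\varphi_f(v_i)$, which is possible since $f$ is antimagic, and then set
\[
g(e)=f(e)+i \quad\text{for } e\in E(G),\qquad g(v_jv_j')=j \quad\text{for } 1\le j\le i.
\]
This is a bijection onto $[m+i]$, the new edges taking $\{1,\dots,i\}$ and the shifted old edges taking $\{i+1,\dots,m+i\}$.

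Next I would tabulate the vertex-sums by type. A new leaf $v_j'$ has degree $1$ and $\varphi_g(v_j')=j$. An old leaf $v_j$ now has degree $2$ and $\varphi_g(v_j)=\varphi_f(v_j)+i+j$, since its unique old edge had label $\varphi_f(v_j)$. An original degree-$2$ vertex $u$ keeps degree $2$ and, seeing only shifted old edges, has $\varphi_g(u)=\varphi_f(u)+2i$; and any vertex $w$ of degree $k\ge3$ keeps degree $k$ with $\varphi_g(w)=\varphi_f(w)+ki$. In particular every old vertex keeps or increases its degree to at least $2$, so the only degree-$1$ vertices of $G\bigoplus V_1'$ are the new leaves, whose sums $1,\dots,i$ are distinct and all at most $i$.

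I would then verify the two defining properties of a strongly antimagic labeling. Distinctness within a degree class of old vertices is inherited from $f$, since all such sums are obtained by adding a common shift $ki$ to the pairwise-distinct values $\varphi_f(\cdot)$. For the cross-degree monotonicity, if $\deg(x)<\deg(y)$ and $x$ is a new leaf then $\varphi_g(x)\le i$, whereas every degree-$\ge2$ vertex has sum exceeding $i$ (indeed $\varphi_f(v_j)+i+j\ge i+2$ and $\varphi_f(u)+2i>i$), so $\varphi_g(x)<\varphi_g(y)$. Otherwise $x,y$ are old vertices with $2\le\deg(x)<\deg(y)$; then $y$ is not an old leaf, so its shift is $\deg(y)\cdot i\ge3i$, while the shift of $x$ is at most $\deg(x)\cdot i\le2i$, and the $G$-degrees of $x,y$ are ordered the same way (an old leaf has $G$-degree $1$, every other vertex keeps its degree), whence $\varphi_f(x)<\varphi_f(y)$ by strong antimagicness of $f$. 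Adding a strictly smaller shift to a strictly smaller value keeps the inequality, so $\varphi_g(x)<\varphi_g(y)$.

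The one genuinely delicate point—and where I expect the main obstacle to lie—is the degree-$2$ class, which after the operation contains two mixed populations: the old leaves, with sums $\varphi_f(v_j)+i+j$, and the original degree-$2$ vertices, with sums $\varphi_f(u)+2i$. The old-leaf sums are strictly increasing in $j$, hence mutually distinct, and the original degree-$2$ sums are distinct by antimagicness; the only possible clash is across the two populations. An equality $\varphi_f(v_j)+i+j=\varphi_f(u)+2i$ would force $\varphi_f(v_j)-\varphi_f(u)=i-j\ge0$, contradicting the inequality $\varphi_f(v_j)<\varphi_f(u)$ that holds because $v_j$ has degree $1$ and $u$ degree $2$ in $G$. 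Thus the bound $j\le i$ combined with the \emph{strong} (not merely antimagic) property of $f$ precisely rules out the collision, and $g$ is a strongly antimagic labeling of $G\bigoplus V_1'$.
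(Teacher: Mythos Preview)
Your proof is correct and follows essentially the same construction as the paper: shift all old labels up by $i=|V_1|$, order the old leaves by their $f$-sums, and assign $1,\dots,i$ to the new pendant edges accordingly. The paper records the resulting vertex-sum formulas and then dismisses the verification with ``by some calculations and comparisons, it is clear that $f'$ is a strongly antimagic labeling''; you have supplied exactly those calculations, including the one nontrivial check---that an old leaf $v_j$ (now of degree $2$, with sum $\varphi_f(v_j)+i+j$) cannot collide with an original degree-$2$ vertex $u$ (with sum $\varphi_f(u)+2i$)---which hinges precisely on $j\le i$ together with the strong inequality $\varphi_f(v_j)<\varphi_f(u)$.
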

\begin{proof}
The proof of this lemma is similar to the proof of a corollary in \cite{H2015}. Let $f$ be a strongly antimagic labeling of $G$ and $V_1=\{v_1,v_2,...,v_i\}$ with $\varphi_f(v_1)<\varphi_f(v_2)<...<\varphi_f(v_i)$.
We construct a bijective mapping $f':E(G\bigoplus V_1')\rightarrow [|E(G)|+i]$ as following.

\[
f'(e)=\left\{
          \begin{array}{ll}
            j, & \hbox{if $e=v_jv_j'\in E',1\leq j\leq i$;} \\
            f(e)+i, & \hbox{if $e\in E$.}
          \end{array}
        \right.
\]

For any vertices $u\in V(G)-V_1 $, $v_j\in V_1$, and $v_{j'}\in V_1'$, the vertex sums
under $f'$ are $\varphi_{f'}(u)=\varphi_f(u)+i\deg(u)$, $\varphi_{f'}(v_j)=\varphi_f(v_j)+j$,
and $\varphi_{f'}(v_j')=j$.
By some calculations and comparisons,
it is clear that $f'$ is a strongly antimagic labeling of $G\bigoplus V_1'$.
\end{proof}

A {\em double spider} is a tree which contains exactly two vertices of degree greater than 2.
It also can formed by first taking two sets of disjoint paths and one extra path,
and then identifying an endpoint of each path in the two sets to the two endpoints of the extra path, respectively. 
In this paper, we manage to solve Conjecture~\ref{g2} for double spiders.
We have a stronger result:

\begin{theorem} \label{main thm}
  Double spiders are strongly antimagic.
\end{theorem}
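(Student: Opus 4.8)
The plan is to construct an explicit bijection $f:E\to[m]$, where $m=|E|$, that sorts the vertices into three tiers by degree: every leaf (degree $1$) receives a smaller vertex-sum than every degree-$2$ vertex, which in turn receives a smaller vertex-sum than each of the two centers (degree $\ge 3$). Write $u$ and $v$ for the two centers, with legs $A_1,\dots,A_a$ hanging from $u$ and $B_1,\dots,B_b$ hanging from $v$, joined by a spine $u=s_0,s_1,\dots,s_p=v$ of $p\ge 1$ edges; here $\deg(u)=a+1$ and $\deg(v)=b+1$, there are $a+b$ leaves, and I may assume $\deg(u)\le\deg(v)$ by symmetry. Since Lemma~\ref{lm1} lets me attach a pendant edge to every leaf while preserving strong antimagicness, an induction on $m$ reduces the problem to double spiders in which some leg has length $1$: whenever every leg has length at least $2$, deleting the leaf-edge of each leg yields a smaller double spider with the same spine and the same two centers (their degrees are unchanged), and its strongly antimagic labeling lifts back through Lemma~\ref{lm1}. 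Thus the core task is the direct construction for the reduced shapes.

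For the tier separation I would assign the $a+b$ smallest labels $1,2,\dots,a+b$ to the $a+b$ leaf-edges, one per leg. Then the leaf-sums are exactly these labels, so the leaves are automatically pairwise distinct and each is at most $a+b$; meanwhile every degree-$2$ vertex is incident to at most one leaf-edge and hence to some edge of label exceeding $a+b$, so its sum is at least $a+b+2$. This places the entire leaf tier strictly below the degree-$2$ tier in one stroke. Symmetrically, I reserve the largest labels for the edges incident to $u$ and $v$. Because each center meets at least three edges, a center carrying labels from the top of $[m]$ has a sum exceeding the sum of any two labels, which is the most a degree-$2$ vertex can achieve; this is what forces both centers into the top tier. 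To separate the two centers I arrange that the single largest label sits on an edge at $v$ and compare $\varphi_f(u)$ with $\varphi_f(v)$ directly, yielding $\varphi_f(u)<\varphi_f(v)$ when $\deg(u)<\deg(v)$ and mere distinctness when $\deg(u)=\deg(v)$.

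The heart of the argument is filling in the middle labels so that all degree-$2$ sums are distinct. I would orient each leg and the spine toward the centers and place the remaining labels in increasing blocks along these paths, so that along any single path the consecutive-edge sums strictly increase; the labels are then distributed across the paths in rounds (all leaf-edges first, then the second edges of every path, and so on) so that the sums produced on different paths occupy disjoint or controllably ordered intervals. The spine must be handled as a distinguished path shared by both centers: its interior vertices $s_1,\dots,s_{p-1}$ are degree $2$ and must stay strictly below $\min\{\varphi_f(u),\varphi_f(v)\}$ even though the spine edges $s_0s_1$ and $s_{p-1}s_p$ carry large labels feeding $u$ and $v$.

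The step I expect to be the main obstacle is precisely this interaction between the spine and the two competing centers. Keeping $\varphi_f(u)<\varphi_f(v)$ (or distinct) while simultaneously holding every spine-interior and leg-interior sum strictly below both centers and strictly above the leaf tier is delicate, because the same large labels that make the centers dominate also inflate the sums of the degree-$2$ vertices adjacent to them, notably $s_1$ and $s_{p-1}$. I anticipate the proof splitting into cases according to the spine length $p$ (a single edge versus a longer path), the comparison of $\deg(u)$ with $\deg(v)$, and the multiset of leg lengths, with the reduced base cases (some leg of length $1$) and small values of $p$, $a$, $b$ verified by hand; the generic cases should then follow from the monotone block assignment described above.
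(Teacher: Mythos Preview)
Your proposal is an outline, not a proof: the central construction---making all degree-$2$ vertex-sums distinct while keeping them strictly below both center sums---is never actually carried out. You acknowledge this yourself (``I anticipate the proof splitting into cases'', ``the generic cases should then follow''), but this step is where essentially all the work lies. Two concrete problems: first, your two guiding principles collide after the Lemma~\ref{lm1} reduction, since you want the $a+b$ smallest labels on the leaf edges \emph{and} the largest labels on the edges at the centers, yet the reduction guarantees a leg of length~$1$ whose single edge is simultaneously a leaf edge and a center edge. Second, and more seriously, your claim that a center ``carrying labels from the top of $[m]$ has a sum exceeding the sum of any two labels'' is not justified and is generally false in your setup: the spine vertex $s_1$ adjacent to $u$ shares the large label $f(s_0s_1)$ with $u$, so $\varphi_f(s_1)=f(s_0s_1)+f(s_1s_2)$ can exceed $\varphi_f(u)$ once $u$ is saddled with a tiny label from a length-$1$ leg. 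Your ``rounds'' scheme (leaf edges, then all second edges, and so on) is also not well-defined when legs have different lengths, and you give no mechanism preventing collisions between degree-$2$ sums on different legs.

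The paper resolves these issues with a genuinely different construction. Rather than small-on-leaves and large-near-centers, it labels by parity: within each leg and on the spine, odd-indexed edges receive labels from roughly the first half of $[m]$ and even-indexed edges from the second half (with the precise interleaving depending on the parity of the leg and whether it lies in $L$ or $R$). This forces, for any two degree-$2$ vertices, the smaller incident labels to be ordered the same way as the larger ones, which is exactly what yields pairwise distinct degree-$2$ sums. A few edges incident to $v_l$ and $v_r$ are deliberately withheld and labeled last to push the center sums above the degree-$2$ tier. Even with this device the paper needs an additional reduction (Lemma~\ref{casebasic}, removing surplus length-$1$ legs) and then four separate structural cases (Lemmas~\ref{deqL equals 3}--\ref{R has even path}), each with its own multi-step labeling rule and a lengthy verification relegated to the Appendix. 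You have correctly located the hard interaction at the spine--center junction, but to turn your plan into a proof you would need an explicit labeling rule and a full check of all the inequalities, comparable in scope to what the paper does in Section~3 and its Appendix.
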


The rest of the paper is organized as follows.
In Section 2, we give some reduction methods 
and classify the double spiders into four types.
For the four types of the double spiders,
we will prove that they are strongly antimagic by giving the labeling rules
in four different lemmas.
Hence, to prove our main theorem, it suffices to prove the lemmas.
The proofs of the lemmas are presented in Section 3.
However, we will only give the labeling rules and show the 
strongly antimagic properties for degree one and degree two vertices.
For the comparisons between other vertices, we put all the details in Appendix.
Some concluding remarks and problems will be proposed in Section 4.

\section{Main Results}

Given a double spider, we decompose its edge set into three subsets:
The core path $P^{core}$, $L$, and $R$, where
$P^{core}$ is the unique path connecting the two vertices of degree at least three,
$L$ consists of paths with one endpoint of each path identified to an endpoint of $P^{core}$,
and $R$ consists of paths with one endpoint of each path identified to the other endpoint of $P^{core}$.
We denote the endpoints of $P^{core}$ by $v_l$ and $v_r$, respectively.
Conventionally, we assume $L$ contains at least as many paths as $R$, hence $\deg(v_l)\ge \deg(v_r)$.
See Figure $1$ as an illustration.
Note that two double spiders are isomorphic if their $L$ sets, $R$ sets, and the core paths are isomorphic.
From now on, we denote a double spider by $DS(L,P^{core},R)$.
The complexity of finding an antimagic labeling of a double spider depends on the number of the paths and their lengths composing the double spider.
So let us begin with reducing the number of paths of length one in $L$ and $R$.

\begin{figure}[h]
\begin{center}
\begin{picture}(220,90)
\put(50,70){\circle*{4}}
\put(30,70){\circle*{4}}
\put(10,70){\circle*{4}}
\put(-10,70){\circle*{4}}
\put(50,50){\circle*{4}}
\put(30,50){\circle*{4}}
\put(10,50){\circle*{4}}
\put(-10,50){\circle*{4}}
\put(-30,50){\circle*{4}}
\put(50,10){\circle*{4}}
\put(50,-10){\circle*{4}}
\put(70,30){\circle*{4}}
\put(68,35){$v_l$}
\put(120,35){$v_r$}
\put(50,70){\line(-1,0){60}}
\put(50,50){\line(-1,0){80}}
\put(70,30){\line(-1,1){20}}
\put(70,30){\line(-1,2){20}}
\put(70,30){\line(-1,-1){20}}
\put(70,30){\line(-1,-2){20}}
\put(100,35){\oval(90,20)}
\put(100,10){$P^{core}$}
\put(90,30){\circle*{4}}
\put(110,30){\circle*{4}}
\put(130,30){\circle*{4}}
\put(70,30){\line(1,0){60}}
\put(150,70){\circle*{4}}
\put(150,70){\line(1,0){40}}
\put(130,30){\line(1,1){20}}
\put(130,30){\line(1,2){20}}
\put(130,30){\line(1,-1){20}}
\put(170,70){\circle*{4}}
\put(190,70){\circle*{4}}
\put(150,50){\circle*{4}}
\put(150,10){\circle*{4}}
\put(170,10){\circle*{4}}
\put(150,10){\line(1,0){20}}
\end{picture}
\begin{picture}(120,80)
\put(0,70){$R:$}
\put(110,70){\circle*{4}}
\put(50,70){\circle*{4}}
\put(70,70){\circle*{4}}
\put(90,70){\circle*{4}}
\put(70,60){\circle*{4}}
\put(90,60){\circle*{4}}
\put(110,60){\circle*{4}}
\put(110,50){\circle*{4}}
\put(90,50){\circle*{4}}
\put(110,70){\line(-1,0){60}}
\put(110,60){\line(-1,0){40}}
\put(110,50){\line(-1,0){20}}

\put(0,30){$L:$}
\put(110,30){\circle*{4}}
\put(30,30){\circle*{4}}
\put(50,30){\circle*{4}}
\put(70,30){\circle*{4}}
\put(90,30){\circle*{4}}
\put(10,20){\circle*{4}}
\put(30,20){\circle*{4}}
\put(50,20){\circle*{4}}
\put(70,20){\circle*{4}}
\put(90,20){\circle*{4}}
\put(110,20){\circle*{4}}
\put(110,10){\circle*{4}}
\put(90,10){\circle*{4}}
\put(110,0){\circle*{4}}
\put(90,0){\circle*{4}}
\put(110,30){\line(-1,0){80}}
\put(110,20){\line(-1,0){100}}
\put(110,10){\line(-1,0){20}}
\put(110,00){\line(-1,0){20}}
\end{picture}
 \end{center}
\caption{A double spider $DS(L,P^{core},R)$.}\label{fig1}
\end{figure}
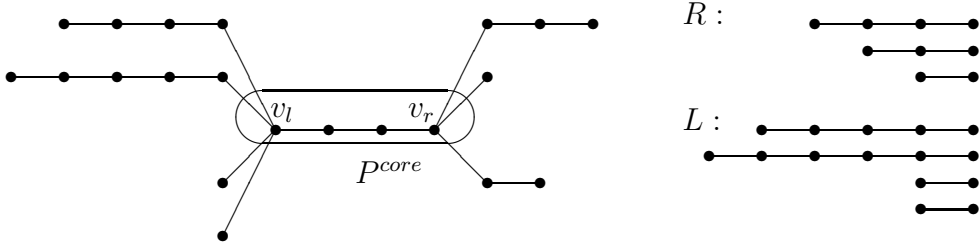

\begin{lemma} \label{Right minus 1}\label{casebasic}

Suppose $G=DS(L,P^{core},R)$ contains some paht $P$ of length one  in $L\cup R$. Assume at least one of the following conditions holds.
\begin{description}
\item{(1)} $P\in R$, $\deg_G(v_l)\geq \deg_G(v_r)>3$, and $DS(L,P^{core},R\setminus \{P\})$ is strongly antimagic,
\item{(2)} $P\in L$, $\deg_G(v_l)>\deg_G(v_r)\geq 3$, and $DS(L\setminus \{P\},P^{core},R)$ has a strongly antimagic labeling $f$ with $\varphi_f(v_l)>\varphi_f(v_r)$.
\end{description}
Then $G$ is strongly antimagic.

 \end{lemma}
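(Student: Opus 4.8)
The plan is to treat both cases by a single construction: delete the length-one path $P$ to pass to the smaller double spider $G'$, shift its strongly antimagic labeling up by one, and assign the vacated label $1$ to the re-attached pendant edge. Write $m=|E(G)|$ and let $u$ be the leaf of $P$. In case (1) set $G'=DS(L,P^{core},R\setminus\{P\})$ and in case (2) set $G'=DS(L\setminus\{P\},P^{core},R)$; in either situation the hypotheses guarantee that $G'$ is again a double spider whose two centers $v_l,v_r$ both have degree at least $3$, and $G'$ carries a strongly antimagic labeling $f'\colon E(G')\to[m-1]$. Define $f\colon E(G)\to[m]$ by $f(P)=1$ and $f(e)=f'(e)+1$ for $e\in E(G')$; this is plainly a bijection.

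The point of shifting is that the vertex-sums transform uniformly. One gets $\varphi_f(u)=1$ for the new leaf, and $\varphi_f(x)=\varphi_{f'}(x)+\deg_G(x)$ for every other vertex $x$. The formula even covers the center $v$ whose degree increased by one ($v_r$ in (1), $v_l$ in (2)): the $+1$ contributed by each of its $\deg_G(v)-1$ surviving edges together with $f(P)=1$ add up to exactly $\deg_G(v)$.

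With this formula in hand the strongly antimagic property reduces to routine degree-class comparisons. In a double spider only $v_l$ and $v_r$ have degree at least three, so every $x\neq u$ has the same degree in $G$ as in $G'$ except the single center whose degree grew, and that center still has degree at least $3$ in $G'$. The new leaf has the globally smallest sum $1$; every old leaf sum rises only by $1$ and stays below the degree-two sums; the degree-two sums stay below both center sums; and any two vertices of equal degree keep distinct sums because $\varphi_f(x)-\varphi_f(y)=\varphi_{f'}(x)-\varphi_{f'}(y)\neq0$. Each such inequality follows from $\varphi_{f'}(x)<\varphi_{f'}(y)$ whenever $\deg_{G'}(x)<\deg_{G'}(y)$, combined with $\deg_G(x)\le\deg_G(y)$.

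The one genuinely delicate comparison, and the reason the hypotheses are stated as they are, is between the two centers. I would use $\varphi_f(v_l)-\varphi_f(v_r)=\bigl(\varphi_{f'}(v_l)-\varphi_{f'}(v_r)\bigr)+\bigl(\deg_G(v_l)-\deg_G(v_r)\bigr)$. In case (1) the deleted edge lies at $v_r$, so $\deg_{G'}(v_l)=\deg_G(v_l)>\deg_G(v_r)-1=\deg_{G'}(v_r)$ and the strong antimagic property of $f'$ already forces $\varphi_{f'}(v_l)>\varphi_{f'}(v_r)$; in case (2) this inequality is exactly the extra condition imposed on $f'$. Either way both summands are nonnegative and at least one is positive, so $\varphi_f(v_l)>\varphi_f(v_r)$, which is precisely what the strongly antimagic condition demands (strict when $\deg_G(v_l)>\deg_G(v_r)$, mere distinctness when the degrees are equal). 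Hence $f$ is strongly antimagic; note that it again satisfies $\varphi_f(v_l)>\varphi_f(v_r)$, the property that allows such reductions to be iterated.
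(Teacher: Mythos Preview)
Your proof is correct and follows essentially the same construction as the paper: shift the strongly antimagic labeling of $G'$ up by one and assign the label $1$ to the new pendant edge. The paper only writes out case~(1) and dispatches the verification with ``it is clear that $f^*$ is a strongly antimagic labeling,'' whereas you treat both cases uniformly and spell out the vertex-sum formula $\varphi_f(x)=\varphi_{f'}(x)+\deg_G(x)$ together with the degree-class comparisons and the $v_l$ versus $v_r$ inequality; your added detail is sound.
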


\begin{proof} We only prove (1), since the proof of (2) is analogous.
  Let $f$ be a strongly antimagic labeling on $G'=DS(L,P^{core},R\setminus \{P\})$, and $P=v_rv$. We create a bijective mapping $f^*$ from $E(G)$ to $[|E(G)|]$ on $G$ by
\[f^*(e)=\left\{
 \begin{array}{ll}
 1, & \hbox{if $e=v_rv$;} \\
 f(e)+1, & \hbox{if $e\in E(G')$}.
 \end{array}
\right.\]
Since $\deg_{G'}(v_r)<\deg_{G'}(v_\ell)$, we have $\varphi_f(v_r)<\varphi_f(v_l)$, and \begin{eqnarray*}
                                                 \varphi_{f^*}(v_r) &=& \varphi_f(v_r)+(\deg_G(v_r)-1)+f^*(v_rv)\\
                                                  &=& \varphi_f(v_r)+\deg_G(v_r) \\
                                                  &<& \varphi_{f}(v_l)+\deg_G(v_l)=\varphi_{f^*}(v_l).
                                               \end{eqnarray*}
 It is clear that $f^*$ is a strongly antimagic labeling of $DS(L,P^{core},R).$
\end{proof}

An {\em odd path (rest. even path)} is a path of odd (even) length.
Now suppose $R$ consists of $a$ odd paths and $b$ even paths,
and $L$ consists of $c$ odd paths with length greater than one, $d$ even paths,
and $t$ odd paths of length one.
By means of the following lemmas, Theorem~\ref{main thm} will be proved.

\begin{lemma} \label{deqL equals 3}
If $\deg(v_l)=\deg(v_r)=3$ then $DS(L,P^{core},R)$ is strongly antimagic.
 \end{lemma}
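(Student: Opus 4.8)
The plan is to exhibit an explicit bijection $f\colon E\to[n]$ (with $n=|E|$) and then verify the three orderings required for a strongly antimagic labeling: every leaf-sum lies below every degree-$2$ sum, every degree-$2$ sum lies below $\varphi_f(v_l)$ and $\varphi_f(v_r)$, and the sums within each degree class are pairwise distinct. Since $\deg(v_l)=\deg(v_r)=3$, the tree is the union of five paths meeting at the two hubs: two legs $A,B$ issuing from $v_l$, the core $P^{core}$, and two legs $D,E$ issuing from $v_r$. Its only vertices other than $v_l,v_r$ are degree-$2$ internal path vertices, and its four leaves are the outer endpoints of $A,B,D,E$. The guiding principle of the labeling is: put small labels on the four leaf edges so the leaf-sums (single labels) sit below everything else; reserve the largest labels for the edges incident to $v_l,v_r$ (insofar as possible) so that $\varphi_f(v_l)$ and $\varphi_f(v_r)$ dominate; and fill the remaining interior edges of each path with a zig-zag, i.e.\ an alternating large/small pattern, that forces the degree-$2$ sums along that path to be strictly monotone and hence distinct.

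For the degree-$1$ comparison I would record that each leaf-sum equals the label on its incident edge, so if the four leaf edges carry four of the smallest labels the leaf-sums are automatically distinct and below the minimum degree-$2$ sum. For the degree-$2$ comparison the zig-zag guarantees that, reading a path outward from a hub, the consecutive-edge sums change monotonically; the largest such sum occurs at the vertex adjacent to a hub, and it still falls short of the hub-sum because the hub additionally collects two further (larger) labels from its other two incident edges. Thus the chain leaf $<$ degree-$2$ $<$ hub is built into the construction, and within a single path distinctness is immediate.

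The hard part is two-fold, and it is precisely what forces the parity bookkeeping behind the paper's four-type classification. First, degree-$2$ sums on \emph{different} paths must not collide; I would handle this by assigning the labels of the five paths in disjoint consecutive blocks and choosing the direction of each zig-zag according to the parity of that path's length, so the resulting ranges of degree-$2$ sums tile without overlap. Second, and most delicate, are the short legs: a leg of length one contributes a single edge that is simultaneously leaf-incident and hub-incident, so it cannot be both tiny (for the leaf) and huge (for the hub). The resolution is to keep such an edge small, preserving its leaf-sum, while securing the hub's dominance through the hub's other two incident edges; when \emph{both} legs at a hub have length one, one must additionally force the adjacent core edge into the zig-zag so the neighbouring core sum stays in the middle band. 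Finally, because $\deg(v_l)=\deg(v_r)$ I cannot separate the two hubs by degree and must check $\varphi_f(v_l)\ne\varphi_f(v_r)$ directly; arranging the largest labels so that $v_l$ receives the strictly larger triple yields $\varphi_f(v_l)>\varphi_f(v_r)$, which both settles this comparison and supplies the extra inequality needed to feed the labeling into Lemma~\ref{Right minus 1}(2). I expect the interplay of the short-leg case with the parity-based tiling of degree-$2$ sums to be the main obstacle, while the remaining hub-to-interior comparisons should be the routine computations the paper defers to its appendix.
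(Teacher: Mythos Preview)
Your outline captures the right intuitions (small labels near leaves, large labels near hubs, zig-zag to separate degree-$2$ sums), but as written it is a plan rather than a proof, and it diverges from the paper in two structural ways that make it hard to complete.

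First, the paper does \emph{not} attack general leg lengths directly. It begins with a reduction via Lemma~\ref{lm1}: repeatedly deleting the leaf set preserves the strongly antimagic property in the reverse direction, so one may assume the shortest leg has length~$1$. Only after this reduction does the paper split into three concrete types (including your ``both legs at one hub have length one'' scenario) and write down explicit labelings---an $11$-step rule for the generic types plus one sporadic double spider (the $L=\{P_3,P_1\}$, $R=\{2P_1\}$, $P^{core}=P_2$ graph) that the rule misses. You skip this reduction and propose to handle all leg-length configurations at once; that is not impossible, but it means many more cases and you have not exhibited a labeling for any of them.

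Second, your device for separating degree-$2$ sums across different paths---``assigning the labels of the five paths in disjoint consecutive blocks''---is not what the paper does and is in tension with your own requirement that all four leaf labels be among the smallest. If each path owns a contiguous block, the leaf edge of the last path carries a large label; if instead you peel off the four leaf labels first and then block the rest, the block structure no longer controls the degree-$2$ sums cleanly. The paper avoids this by \emph{interleaving}: it labels the odd-indexed edges of all legs (and part of the core) in one pass, then the even-indexed edges in a second pass. This guarantees that every degree-$2$ vertex sees one ``early'' and one ``late'' label, automatically pushing every degree-$2$ sum above every leaf sum, while the global order within each pass makes the smaller labels at distinct degree-$2$ vertices line up monotonically with the larger ones. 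Your block idea does not obviously deliver this, and the existence of the sporadic exception in the paper's own scheme suggests that any general rule here is delicate enough that it must be written out and checked, not merely asserted.
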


\begin{lemma} \label{R has two P1}
If $\deg(v_l)>\deg(v_r)\geq 3$, $b=0$, and $R$ has no odd path of length at least $3$, then $DS(L,P^{core},R)$ is strongly antimagic.
 \end{lemma}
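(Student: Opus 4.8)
The plan is to argue by induction on $\deg(v_r)$, peeling off pendant edges of $R$ until only the irreducible case remains, and then to exhibit one explicit labeling. First note what the hypotheses force: since $b=0$ and $R$ contains no odd path of length at least $3$, every path of $R$ is a single pendant edge, so $R$ consists of $\deg(v_r)-1\ge 2$ leaves hanging at $v_r$. When $\deg(v_r)>3$, pick any such pendant edge $P$. Then $DS(L,P^{core},R\setminus\{P\})$ still has all its $R$-paths of length one, still satisfies $\deg(v_l)>\deg(v_r)-1\ge 3$, and is strongly antimagic by the induction hypothesis; hence $G$ is strongly antimagic by Lemma~\ref{Right minus 1}(1). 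Consequently it suffices to handle the base case $\deg(v_r)=3$, that is, when $R$ is exactly two pendant edges and $\deg(v_l)\ge 4$.

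For the base case I would label explicitly. Write $v_l=u_0,u_1,\dots,u_m=v_r$ for $P^{core}$ with edges $g_1,\dots,g_m$, let $p_1,p_2$ be the two pendant edges at $v_r$, and set $s=c+d+t+2$, the number of leaves (equivalently, the number of leaf edges). The governing idea is that each degree-$1$ sum is the single label on a leaf edge, whereas each degree-$2$ sum is a sum of two labels at least one of which is a non-leaf label (no degree-$2$ vertex is incident to two leaves). So assigning the smallest labels $1,\dots,s$ to the leaf edges forces every degree-$1$ sum to be at most $s$ while every degree-$2$ sum is at least $s+2$; this settles the degree-one-versus-degree-two comparisons in one stroke. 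Within this block I would put the two largest leaf labels $s-1,s$ on $p_1,p_2$, so that $v_r$ carries as much weight as possible even though both its pendant edges are forced to be small. The remaining labels $s+1,\dots,|E|$ go on the interior edges of the legs and on the core: the very largest are reserved for the $c+d+1$ non-leaf edges meeting $v_l$, so that $\varphi_f(v_l)$ becomes the global maximum, a comparably large label is placed on the core edge $g_m$ at $v_r$, and along each long leg and along the core the intermediate labels are distributed in an increasing ``staircase'' so that successive interior vertex-sums strictly increase and the blocks coming from different legs interleave without collision.

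Once the rule is fixed, the degree-one and degree-two comparisons are immediate from the paragraph above, and the remaining bookkeeping (comparing the two hubs and the interior vertices) is what I would defer to the appendix, as the paper does. The delicate point, and the one I expect to be the real obstacle, is the vertex $v_r$: it has degree only $3$, yet its sum must exceed every degree-$2$ sum while staying below $\varphi_f(v_l)$, even though two of its three edges are leaf edges forced to be small. The sharpest constraint comes from the neighbour $u_{m-1}$, whose sum $f(g_{m-1})+f(g_m)$ must lie below $\varphi_f(v_r)=f(p_1)+f(p_2)+f(g_m)$; this requires $f(g_{m-1})<f(p_1)+f(p_2)=2s-1$, so one is forced to place a \emph{small} non-leaf label on $g_{m-1}$ and, more generally, to route the small interior labels onto the core edges near $v_r$, all while still reserving enough of the top labels for $v_l$ to keep $\varphi_f(v_l)$ strictly largest. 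Making these two demands compatible --- a heavy $v_l$ together with a $v_r$ that dominates every degree-$2$ vertex but not $v_l$ --- is precisely where the decision of which non-leaf labels land on the core rather than on the legs must be made with care.
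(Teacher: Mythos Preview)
Your reduction to $\deg(v_r)=3$ via Lemma~\ref{Right minus 1}(1) is correct and matches the paper. The gap is in your base-case labeling.

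Your rule places the two largest leaf labels $s-1,s$ on $p_1,p_2$ in order to make $\varphi_f(v_r)$ large enough to beat all degree-two vertices. But this overshoots: when $L$ consists mostly (or entirely) of $P_1$'s, you cannot then recover $\varphi_f(v_l)>\varphi_f(v_r)$. Concretely, take $c=d=0$, $t=3$, and core length $1$ (so $s=5$ leaves, $|E|=6$). Your rule assigns $4,5$ to $p_1,p_2$, the labels $1,2,3$ to the three $L$-leaves, and $6$ to the core edge; then $\varphi_f(v_l)=1+2+3+6=12<15=4+5+6=\varphi_f(v_r)$. Lengthening the core does not rescue this: with core length $2$ and $g_1=7$, $g_2=6$ you get $\varphi_f(v_l)=13<15=\varphi_f(v_r)$. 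Your proposed remedy --- choosing carefully ``which non-leaf labels land on the core rather than on the legs'' --- is unavailable precisely here, because when $c=d=0$ there are \emph{no} non-leaf edges in the legs at all; the only freedom is on the core, and it is not enough.

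The paper sidesteps this by performing a second reduction before labeling: using Lemma~\ref{Right minus 1}(2), it also strips $P_1$'s from $L$ until $t=0$ (or $L$ itself has only two paths). Only after this does it label, and even then it does \emph{not} send the smallest block of labels to the leaf edges; instead it uses a parity-based interleaving in which odd and even edges of each leg receive labels from separated ranges, with certain edges at $v_l$ and $v_r$ deliberately held back for the largest labels. Even so, one exceptional configuration (Figure~\ref{fig 2}) has to be labeled by hand. Your overall plan is sound, but the base-case construction needs to be reworked along these lines; committing to ``leaves get $1,\dots,s$ with $s-1,s$ on $R$'' is too rigid to succeed in general.
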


\begin{lemma} \label{R has odd path}
If $\deg(v_l)>\deg(v_r)\geq 3$, $b=0$, and $R$ has at least one odd path of length at least $3$, then $DS(L,P^{core},R)$ is strongly antimagic.
 \end{lemma}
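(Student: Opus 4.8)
The plan is to exhibit an explicit bijection $f\colon E\to[|E|]$ and to verify the strongly antimagic conditions directly, in the order the paper prescribes: show that every leaf sum lies in the lowest range, that the degree-two sums are pairwise distinct and lie strictly above all leaf sums, and finally (the part to be deferred to the appendix) that $\varphi_f(v_r)$ and $\varphi_f(v_l)$ dominate every degree-two sum with $\varphi_f(v_r)<\varphi_f(v_l)$, which is consistent with $\deg(v_r)<\deg(v_l)$. I would first orient each leg from its leaf toward the center vertex it is attached to, and orient $P^{core}$ from $v_r$ to $v_l$, so that every edge has a well-defined position recorded by its leg and its distance from the leaf. This gives a clean global ordering of edges to which labels can be assigned.

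The guiding principle for the labels is monotonicity along each leg: on a leg of length at least two the outermost (leaf) edge receives the smallest label of that leg and the labels increase toward the center. This secures that (a) each leaf sum, being a single edge label, is small, and (b) the degree-two vertex adjacent to a leaf has sum equal to its small leaf-edge label plus a strictly larger label. I would then reserve an initial block of labels for the leaf edges of the long legs, push the largest labels onto the innermost edges of the long legs together with the core edges at $v_l$ and $v_r$, and fill the middle labels along the legs increasingly from outside in, chosen so that the smallest interior label beats the largest leaf label. The length-one legs in $L$ (there are $t$ of them) require separate care, since such an edge is simultaneously a leaf edge and an edge of $E(v_l)$; it must carry a small label for the sake of its leaf, which is harmless because $v_l$ has high degree and its remaining incident edges can be made large. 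This is exactly why the statement tracks $t$ separately.

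The parity split in the hypothesis ($R$ consisting of odd legs only, $L$ split into $c$ long odd, $d$ even, and $t$ length-one odd legs) is forced by a collision phenomenon: with plainly increasing labels along a path the interior sums form an arithmetic progression of a single fixed parity, and the far leaf sum can coincide with one of them precisely on odd legs. I would therefore apply slightly different rules to odd and even legs, for instance swapping the two labels nearest the leaf on odd legs, so that the legs contribute interior sums in distinct intervals or residue classes and all degree-two sums become globally distinct. The extra odd leg of length at least three in $R$ supplies the room this lemma needs that Lemma~\ref{R has two P1} lacked: its interior vertices furnish a controllable run of degree-two sums on the $R$-side, and, crucially, its edge incident to $v_r$ can be given a large label so that $\varphi_f(v_r)$ is pushed high, something impossible when $R$ is built entirely from length-one legs.

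The main obstacle is the high-degree comparison, and specifically clearing $v_r$. Since $\deg(v_r)$ may be as small as three while the tree can contain many long legs producing large degree-two sums, I must show that the few labels summed at $v_r$ still exceed every degree-two sum, and at the same time that $\varphi_f(v_r)$ remains below $\varphi_f(v_l)$. The delicate bookkeeping is to choose the block boundaries and the per-leg rules so that the largest degree-two sum, the value $\varphi_f(v_r)$, and the value $\varphi_f(v_l)$ emerge in the correct strict order; the long odd $R$-leg is precisely the device that makes $\varphi_f(v_r)$ large enough without disturbing the monotone leaf-and-interior structure. Once the labeling rule and these block choices are pinned down, the verification for degree-one and degree-two vertices reduces to the routine arithmetic the paper carries out in the lemma, and the remaining inequalities among $v_l$, $v_r$, and the degree-two vertices are the calculations relegated to the appendix.
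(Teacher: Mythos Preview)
Your high-level instincts are partly right---the paper does reserve a large label for the edge of the long odd $R$-leg at $v_r$ (Step~10 sets $f(e^{r,odd}_{a,1})=m-c-s_2$), and it does push the very largest labels onto edges incident to $v_l$ and the last core edges. But your central mechanism, monotone labels along each leg patched by local swaps, is not what the paper does and, as written, has a real gap. The paper's labeling is \emph{not} monotone along legs. It splits the edges of every leg by the parity of the edge index $j$ and assigns all odd-indexed edges first (the small half of $[m]$) and all even-indexed edges afterward (the large half), interleaving legs within each half. Consequently every degree-two vertex is incident to exactly one small-block edge and one large-block edge, and the two blocks are filled in the \emph{same global order}: if the small edge at $u'$ precedes the small edge at $u''$, then the large edge at $u'$ precedes the large edge at $u''$. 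That single order-preserving property is what yields distinctness of all degree-two sums in one line, with no case analysis.

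Your monotone-per-leg scheme does not have this property across different legs. The swap you propose (``swapping the two labels nearest the leaf on odd legs'') addresses only the within-leg collision you identified; it does nothing for collisions between degree-two sums on different legs, which is the actual difficulty when there are many legs of varying lengths. More fundamentally, you never specify the bijection: which leg gets which block, how the core edges are threaded in, how the $t$ length-one legs in $L$ are placed relative to the leaf edges of long legs. The paper's proof is twelve explicit formulas assigning every label; without an equally explicit rule your degree-two distinctness claim cannot be verified, and the promised ``routine arithmetic'' has no object to operate on.
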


\begin{lemma} \label{R has even path}
If $\deg(v_l)>\deg(v_r)\geq 3$ and $b\geqslant 1$, then $DS(L,P^{core},R)$ is strongly antimagic.
\end{lemma}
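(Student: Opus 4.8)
The plan is to construct an explicit strongly antimagic labeling $f$ of $G=DS(L,P^{core},R)$ directly, using the guaranteed even path in $R$ to pin down the parity of the labels that meet $v_r$. Before building $f$, I would use the reduction lemmas to reach a clean configuration: whenever $R$ still carries an odd path of length one and $\deg(v_r)>3$, Lemma~\ref{casebasic}(1) strips it off, and Lemma~\ref{casebasic}(2) removes length-one paths from $L$ (the strict inequality $\deg(v_l)>\deg(v_r)$ is precisely what supplies the required $\varphi_f(v_l)>\varphi_f(v_r)$ in that lemma's hypothesis). After this cleanup I may assume only as many length-one paths remain as are needed to keep $\deg(v_r)\ge 3$, while the distinguished even path of $R$ (which has length $\ge 2$) is untouched.

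The heart of the argument is the labeling rule. I would orient each pendant path outward from its center and thread the values $1,2,\ldots,|E|$ through the paths in a fixed order, assigning labels in an alternating large--small pattern read from the leaf inward. The design principle is that the three groups of vertices occupy three disjoint blocks of vertex-sums: each leaf edge carries one of the smallest labels, so the degree-one sums form an increasing block at the bottom; each interior degree-two vertex collects one small and one large label, landing its sum in a middle block with strictly increasing sums along every path; and the edges actually incident to $v_l$ and $v_r$ receive the largest labels, lifting $\varphi_f(v_l)$ and $\varphi_f(v_r)$ to the top. I would process the even path of $R$ last on the $R$ side: since its length is even, the alternating pattern terminates with a large label on the edge meeting $v_r$, and the spare parity gives one extra degree of freedom to slide $\varphi_f(v_r)$ into the narrow window just below $\varphi_f(v_l)$.

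Verification then splits by degree. For the degree-one vertices I simply read off that their sums are the distinct leaf-edge labels filling the lowest interval, and for the degree-two vertices I check that each small-plus-large pair lands in the intermediate interval with consecutive interior sums strictly increasing and distinct across paths; these are the routine inequalities the paper keeps in the main text. The hard part, and the step I expect to demand the most care, is the comparison among the degree-$\ge 3$ vertices: I must show that both $\varphi_f(v_l)$ and $\varphi_f(v_r)$ exceed every degree-two sum and, crucially, that $\varphi_f(v_l)>\varphi_f(v_r)$ even though $v_r$ may absorb several of the very largest labels. This is exactly where the even-path parity is spent -- it forces the single top label onto the $L$ side, after which a counting estimate comparing the $\deg(v_l)$ largest center-edge labels against the $\deg(v_r)$ next-largest ones closes the gap. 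Following the paper's convention, I would relegate this center-versus-center inequality to the Appendix.
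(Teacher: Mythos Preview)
Your reduction step via Lemma~\ref{casebasic} does not work as stated and, more importantly, does not avoid the central difficulty. Lemma~\ref{casebasic}(2) requires that the \emph{smaller} graph $DS(L\setminus\{P\},P^{core},R)$ already admit a strongly antimagic labeling with $\varphi_f(v_l)>\varphi_f(v_r)$; you justify this by ``the strict inequality $\deg(v_l)>\deg(v_r)$'', but after removing a $P_1$ from $L$ that inequality may become an equality, at which point the smaller graph falls outside the hypothesis of the lemma you are proving and you have no inductive leverage. Even if you stop the reduction one step early (so that $\deg(v_l)=\deg(v_r)+1$), $L$ can still consist almost entirely of $P_1$'s: if $c=d=0$ you are left with $t=a+b+1$ length-one legs at $v_l$ against $b\ge 2$ even legs at $v_r$.

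This is exactly the configuration your labeling sketch does not survive. By your own principle ``each leaf edge carries one of the smallest labels'', the $t$ pendant edges at $v_l$ receive small values, while every edge of $E(v_r)$ sits on a path of length $\ge 2$ and receives a large value under the alternating scheme; a naive count then gives $\varphi_f(v_l)<\varphi_f(v_r)$. Your claim that the even-path parity ``forces the single top label onto the $L$ side'' is backwards: the top label $m$ goes to $e_s\in E(v_r)$, and the large label on the edge $e^{r,even}_{b,1}$ of the distinguished even path also lands at $v_r$. The paper does not reduce first; it labels the original graph directly and introduces the parameters $\alpha=\max\{0,(b-1)-(c+d)\}$ and $\beta=\min\{\alpha,b_2\}$ precisely to detect when $t$ is dangerously large, then interleaves the labels of $\beta$ pendant edges in $L$ with edges of $P_2$'s in $R$ (Step~1) and reorders the labeling on $\alpha-\beta$ further even paths (Steps~8, 11, 17) to pull $\varphi_f(v_r)$ down. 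Your plan shows no mechanism of this kind, and without one the inequality $\varphi_f(v_l)>\varphi_f(v_r)$ fails in the $c=d=0$ regime.
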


\noindent{\bf Proof of Theorem~\ref{main thm}.}
By Lemma \ref{deqL equals 3}, \ref{R has two P1}, \ref{R has odd path}, and \ref{R has even path},
the remaining case we need to show is $\deg(v_l)=\deg(v_r)\geq 4$.
For such a double spider $DS(L,P^{core},R)$,
let $h$ be the minimum length of a path in $L\cup R$. 
Without loss of generality, we assume there is a $P_h$ in $R$.
Consider the double spider $DS(L',P^{core},R')$ that is obtained by
recursively deleting the leaf sets of $DS(L,P^{core},R)$ and of the resulting graphs $h-1$ times.
According to Lemma \ref{lm1}, we only need to show that $DS(L',P^{core},R')$
is strongly antimagic.
It is clear that $R'$ contains a path $P$ of length one.
By Lemma \ref{Right minus 1}, it is sufficient to show $G^{*}=DS(L',P^{core},R'\setminus \{P\})$ is strongly antimagic. Now $\deg_{G^{*}}(v_l)>\deg_{G^{*}}(v_r)$, by Lemma \ref{R has two P1}, \ref{R has odd path}, and \ref{R has even path}, $G^{*}$ is strongly antimagic.
\qed

\section{Proofs of the Remaining Lemmas}

In this section, we are going to prove the Lemmas in last section.
We will give the rules to label the double spiders in each proof.
However, part of the work of checking the strongly antimagic property is moved to Appendix
because of the tedious and complicated calculations.

To achieve the goal, we need to give all edges and vertices the informative names.
Here we use $P_h$ to denote a path with length $h$, i.e. $P_h=u_0u_1u_2,...,u_h$,
which is not the common way but is helpful for us to simply the notation in our proof.
In addition, for paths of the same length in $L$ (or $R$), we can interchange the labelings on the edges of one paths with those of another. Thus, only the length of a path matters,
and we use the same notation to represent paths of the same length in $L$ or $R$.
Now, let $DS(L,P^{core},R)$ be a double spider with path
parameters $a$, $b$, $c$, $d$, and $t$ defined in Section 2,
and let $s$ be the length of $P^{core}$. Then we name the vertices and edges on the paths as follows:

$P^{core} = v_1v_2\cdots v_{s+1}$ with $v_1=v_\ell,v_{s+1}=v_r$ and $e_i=v_iv_{i+1}$.

$R= \{P_{2x_1+1},P_{2x_2+1},...,P_{2x_a+1},P_{2y_1},P_{2y_2},...,P_{2y_b}\}$ with  $y_1\leq \cdots\leq y_b.$

\begin{itemize}

\item  $P_{2x_i+1}=v_rv^{r, odd}_{i,1}v^{r, odd}_{i,2}\cdots v^{r, odd}_{i,2x_i+1}$
with $e^{r, odd}_{i,j}=v^{r, odd}_{i,j-1}v^{r, odd}_{i,j}$ and $e^{r,odd}_{i,1}=v_rv^{r, odd}_{i,1}$.   
\item  $P_{2y_i} =v_rv^{r,even}_{i,1}v^{r,even}_{i,2}\cdots v^{r,even}_{2y_i}$
with  $e^{r, even}_{i,j}=v^{r,even}_{i,j-1}v^{r,even}_{i,j}$ and $e^{r,even}_{i,1}=v_rv^{r, even}_{i,1}$.  
\end{itemize}

$L = \{P_{2w_1+1},...,P_{2w_c+1},P_{2z_1},...,P_{2z_d},P^1_1,P^2_1,...,P^t_1\}$ with $w_i\geq 1 $.

\begin{itemize}

\item  $P_{2w_i+1}=v_l v^{l, odd}_{i,2w_i+1} v^{l, odd}_{i,2w_i}\cdots v^{l, odd}_{i,1}$
with $e^{l, odd}_{i,j-1}=v^{l, odd}_{i,j} v^{l, odd}_{i,j-1}$ and $e^{l, odd}_{i,2w_i+1}=v_lv^{l,odd}_{i,2w_i+1}$. 
\item  $P_{2z_i} =v_lv^{l,even}_{i,2z_i}v^{l,even}_{i,2z_i-1}\cdots v^{l,even}_{i,1}$
with $e^{l, even}_{i,j-1}=v^{l,even}_{i,j}v^{l,even}_{i,j-1}$ and $e^{l, even}_{i,2z_i}=v_lv^{l,even}_{i,2z_i}$.
\item  $P^i_1 = v_lv^i_1$ with $e^i=v_lv^i_1$ $1\leq i\leq t$.

\end{itemize}

A vertex (resp. edge) denoted as $v_{i,j}^{r,odd}$ (resp. $e_{i,j}^{l,even}$) means that it is the $j$th vertex (resp. edge) of the $i$th odd (resp. even) path in $R$ (resp. $L$).
Observe that the index $j$ of an edge of a path in $R$ is increasing from $v_r$ to the leaf of the path, but the index of that in $L$ is reverse.
An edge of a path is called an {\em odd} (or {\em even}) {\em edge} if the index $j$ of the edge is odd (or even).
Define the following quantities for the total number of odd (even) edges in some odd (even) paths. (The summation is zero if $i=0$. )
\[
\begin{array}{lll}
A^{odd}_i=\sum_{k=1}^{i}(x_k+1),& A^{even}_i=\sum_{k=1}^{i}x_k,& B^{odd}_i=B^{even}_i=\sum_{k=1}^{i}y_k,\\
C^{odd}_i=\sum_{k=1}^{i}(w_k+1),& C^{even}_i=\sum_{k=1}^{i}w_k,& D^{odd}_i=D^{even}_i=\sum_{k=1}^{i}z_k.
\end{array}
\]
Let $A^{all}=A^{odd}_a+A^{even}_a,\ldots,D^{all}=D^{odd}_d+D^{even}_d$.
Then the total number of edges $m=A^{all}+B^{all}+s+C^{all}+D^{all}+t$.
We use $[n]_{odd}$ and $[n_1,n_2]_{odd}$ to denote the set of all odd integers in $[n]$ and the set of all odd integers in $\{n_1,n_1+1,\ldots, n_2\}$, respectively.
The definitions of $[n]_{even}$ and $[n_1,n_2]_{even}$ are similar.

Let us begin with Lemma \ref{R has odd path}, which is the simplest one.

\noindent{\bf Proof of Lemma \ref{R has odd path}.} 
We construct a bijective mapping $f$ by assigning $1,2,\ldots,m$ to the edges accordingly in the following steps. 
Some steps can be skipped if no such edges exist. Without loss of generality, $x_a\ge 1$.

\noindent{\bf Step 1.} Label the odd edges of the odd paths in $R$ by 
\begin{center}
$f(e^{r,odd}_{i,j})=
\left\{
\begin{array}{ll}
A^{odd}_{i-1}+\frac{j+1}{2},&  \hbox{for } i\in [a-1]\hbox{ and }j\in[2x_i+1]_{odd}\\ 
A^{odd}_{a-1}+\frac{j-1}{2},& \hbox{for } i=a\hbox{ and }j\in[2,2x_a+1]_{odd}\end{array}
\right.
$
\end{center}
We label the edge $e^{r,odd}_{a,1}$ later in order to ensure that the vertex sum at $v_r$ later is large enough.
Recall $A_0^{odd}=0$.

\noindent{\bf Step 2.} If $c\ge 1$, for $i\in[c]$ and $j\in[2w_i]_{odd}$, label the odd edges of the odd paths in $L$ by 
\begin{center}
$f(e^{l,odd}_{i,j})=A^{odd}_a-1+C^{odd}_{i-1}-(i-1)+\frac{j+1}{2}$.
\end{center}
We also leave the $c$ edges $e^{l,odd}_{i,2w_i+1}$ for $ 1\leq i\leq c$ to enlarge the vertex sum at $v_l$. 

\noindent{\bf Step 3.} If $s\ge 4$, label the edges of $P^{core}$ by,  
\begin{equation*}
f(e_j)=A^{odd}_a-1+C^{odd}_c-c+\left\{ 
\begin{array}{ll}
\frac{s-j}{2}, \hbox{ for }j\in[2, s-2]_{even}, & \hbox{when $s$}\hbox{ is even}. \\
\frac{j-1}{2}, \hbox{ for }j\in[3, s-2]_{odd}, & \hbox{when $s$}\hbox{ is odd}.
\end{array}
\right.
\end{equation*}
In this step, we have labeled $s_1=\lfloor\frac{|s-2|}{2}\rfloor$ edges on the core path $P_s$.

\noindent{\bf Step 4.} If $d\ge 1$, for $i\in[d]$ and $j\in[2z_i]_{odd}$, 
label the odd edges of the even paths in $L$ by 
\begin{center}
$f(e^{l,even}_{i,j})=A^{odd}_a-1+C^{odd}_c-c+s_1+D^{odd}_{i-1}+\frac{j+1}{2}$.
\end{center}

\noindent{\bf Step 5.} If $t\ge 1$, for $i\in[t]$, label the paths of length one in $L$ by  
\begin{center}
$f(e^i)=A^{odd}_a-1+C^{odd}_c-c+s_1+D^{odd}_d+i$.
\end{center}

\noindent{\bf Step 6.} For $i\in[a]$ and $j\in[2x_i]_{even}$, label the even edges of the odd paths in $R$ by 
\begin{center}
$f(e^{r,odd}_{i,j})=A^{odd}_a-1+C^{odd}_c-c+s_1+D^{odd}_d+t+A^{even}_{i-1}+\frac{j}{2}$.
\end{center}

\noindent{\bf Step 7.} If $c\ge 1$, for $i\in[c]$ and $j\in[2w_i]_{even}$, label the even edges of the odd paths in $L$ by
\begin{center}
$f(e^{l,odd}_{i,j})=A^{all}-1+C^{odd}_c-c+s_1+D^{odd}_d+t+C^{even}_{i-1}+\frac{j}{2}$.
\end{center}

\noindent{\bf Step 8.} If $s\ge 2$, label the edges in $P^{core}$ by  
\begin{center}
$f(e_j)=A^{all}-1+C^{all}-c+s_1+D^{odd}_d+t+\left\{
                                                                    \begin{array}{lll}
                                                                      \frac{s+1-j}{2},&\hbox{for }  j\in [s]_{odd}, & \hbox{when } s\hbox{ is even}. \\
                                                                      \frac{j}{2},&\hbox{for }  j\in [s]_{even}, & \hbox{when } s\hbox{ is odd}.
                                                                    \end{array}
\right.
$
\end{center}
We now have $s_2$ unlabeled edges on $P^{core}$, where 
$s_2=1$, if $s=1$ or $s$ is even, otherwise $s_2=2$.

\noindent{\bf Step 9.} If $d\ge 1$, for $i\in[d]$ and $j\in[2z_i]_{even}$, label the even edges of the even paths in $L$ by
\begin{center}
$f(e^{l,even}_{i,j})=A^{all}-1+C^{all}-c+s-s_2+D^{odd}_d+t+D^{even}_{i-1}+\frac{j}{2}$.
\end{center}

\noindent{\bf Step 10.} Label the edge $e^{r,odd}_{a,1}$ by
\begin{center}
$f(e^{r,odd}_{a,1})=A^{all}-1+C^{all}-c+s-s_2+D^{all}+t+1=m-c-s_2.$
\end{center}

\noindent{\bf Step 11.} If $c\ge 1$, for $i\in[c]$, label the edges $e^{l,odd}_{i,2w_i+1}$ by 
\begin{center}
$f(e^{l,odd}_{i,2w_i+1})=m-c-s_2+i$. 
\end{center}

\noindent{\bf Step 12.} Label the remaining edges in $P^{core}$ by the following rules:
If $s=1$ or $s$ is even, then let $f(e_s)=m$; otherwise, let $f(e_1)=m-1$ and $f(e_s)=m$.

We prove that  $f$ is strongly antimagic:

{\bf Claim:} $\varphi_f(u)> \varphi_f(v)$ for any $u\in V_2$ and $v\in V_1$.

Observe that, at Step 5, every pendant edge has been labeled, 
and for a vertex $u$ in $V_2$, there is an unlabeled edge in $E(u)$. 
This guarantees that $\varphi_f(u)>\varphi_f(v)$ for every vertex $v$ in $V_1$.

{\bf Claim:} $\varphi(u)$ are all distinct for $u\in V_2$.

For any two vertices $u'$ and $u''$ in $V_2$, let $E(u')=\{e^1_{u'}, e^2_{u'}\}$ and $E(u'')=\{e^1_{u''}, e^2_{u''}\}$.
Assume $f(e^1_{u'})<  f(e^2_{u'})$ and $f(e^1_{u''})<  f(e^2_{u''})$.
Our labeling rules give that 
if $f(e^1_{u'})\le  f(e^1_{u''})$, then $f(e^2_{u'})\le  f(e^2_{u''})$, and at least 
one of the inequalities is strict. This guarantees that 
$\varphi_f(u)$ are distinct for all $u\in V_2$.  

For  $\varphi_f(v_l)>\varphi_f(v_r)>\varphi_f(u)$ for any $u\in V_2$, see Appendix. 
\qed

The next is the proof of Lemma~\ref{R has even path}.

\noindent{\bf Proof of Lemma~\ref{R has even path}.} 
We use similar concepts of the proof of Lemma \ref{R has odd path} to create a bijection from $E$ to $[m]$.
However, the rules will be a little more complicated than those in Lemma \ref{R has odd path}.
In our basic principles, for each path in $L$ or $R$, 
edges of the same parity as the pendent edge on the same path should be labeled first in general, 
except for some edges incident to $v_l$ or $v_r$; and edges of different parities to the pendent edge on the same path
will always be labeled after all pendent edges have been labeled (so that the vertex sum at a vertex of degree two 
can be greater than the vertex sum of a pendent vertex).   
Thus, when $L$ contains $P_1$'s, the labels of these edges will be less than the 
labels of the odd edges incident to $v_r$ on the even paths in $R$. 
This could lead to $\varphi_f(v_l)<\varphi_f(v_r)$ if there are too many $P_1$'s in $L$.

Once this happens, our solution is to switch the labeling order of some edges in the even paths in $R$. 
More precisely, we need to change the labeling orders of the edges for  
$\alpha$ even paths, where $\alpha=\max\{0, (b-1)-(c+d)\}$, to construct the desired strongly antimagic labeling.
In addition, we will change the labeling order of the edges on $P_2$'s first. 
Let $b_2$ be the number of $P_2$'s in $R$ and $\beta=\min\{\alpha ,b_2\}$. 
If $\alpha>0$, then $b-1>c+d$.
Since $\deg(v_l)=c+d+t>a+b=\deg(v_r)$ , we have $t>a+1+\alpha> \beta$.

The followings are our labeling rules. Again, some steps can be skipped if no such edges exist.

\noindent{\bf Step 1.} If $\beta>0$, we first label the odd edges of $\beta$ $P_2$'s in $R$ and the edges of $\beta-1$ $P_1$'s in $L$ by
\[f(e^{r,even}_{i,1})=2i-1\hbox{ for } i\in[\beta]\hbox{ and } f(e^{i})=2i \hbox{ for } i\in[\beta -1].\]
Previously, we should label the even edges of $P_2$ in $R$, but now we first label the odd edges of them and leave the pendant edges to be labeled later. 
Observe that for the label of each $e^{r,even}_{i,1}$ is an odd integer, 
and $f(e^i)>f(e^{r,even}_{i,1})$ for $i\in [\beta-1]$. 
We define \[\beta_1=\max\{0,\beta-1\}.\] 

\noindent{\bf Step 2.}
If $\alpha>\beta=b_2$, for $i\in [\beta+1, \alpha]$ and $j\in[4, 2y_i]_{even}$, label the even edges  of the even paths in $R$ by
\begin{center}
$f(e^{r,even}_{i,j})=\beta_1+B^{even}_{i-1}-(i-(\beta+1))+\frac{j-2}{2}$. 
\end{center}
Furthermore, if $b-1>\alpha$, we also label $e^{r,even}_{i,j}$ 
for $i\in [\alpha+1, b-1]$ and $j\in[2y_i]_{even}$ by
\[f(e^{r,even}_{i,j})=\beta_1+B^{even}_{i-1}-(\alpha-\beta)+\frac{j}{2}.\]

\noindent{\bf Step 3.} If $a\ge 1$, for $i\in [a]$ and $j\in[ 2x_i+1]_{odd}$, label the odd edges of odd paths in $R$ by 
\begin{center}
$f(e^{r,odd}_{i,j})=\beta_1+B^{even}_{b-1}-(\alpha-\beta)+A^{odd}_{i-1}+\frac{j+1}{2}$.
\end{center}

\noindent{\bf Step 4.}  If $c\ge 1$, for $i\in [c]$ and $j\in[ 2w_i]_{odd}$, label the odd edges of odd paths in $L$ by 
\begin{center}
$f(e^{l,odd}_{i,j})=\beta_1+B^{even}_{b-1}-(\alpha-\beta)+A^{odd}_{a}+C^{odd}_{i-1}-(i-1)+\frac{j+1}{2}$.
\end{center}
We leave the edges $e^{l,odd}_{i,2w_i+1}$ of odd paths in $L$ to be labeled later to ensure that 
$v_l$ has a large vertex sum.

\noindent{\bf Step 5.} If $s\ge 4$, label the edges of $P^{core}$ by 
\begin{align*}
f(e_j)&=\beta_1+B^{even}_{b-1}-(\alpha-\beta)+A^{odd}_a+C^{odd}_c-c\\
& +\left\{
 \begin{array}{ll}
\frac{s-j}{2}, \hbox{ for }j\in[2, s-2]_{even}, & \hbox{when $s$}\hbox{ is even}. \\
\frac{j-1}{2}, \hbox{ for }j\in[3, s-2]_{odd}, & \hbox{when $s$}\hbox{ is odd}.
                                                        \end{array}
                                                    \right.																										
\end{align*}

Again, we have labeled $s_1=\lfloor\frac{|s-2|}{2}\rfloor$ edges on the core path $P_s$ at this step.
Next, we label the even edges of the $b$-th even path in $R$. 

\noindent{\bf Step 6.} For $j\in[2y_b]_{even}$, label the edges $e^{r,even}_{b,j}$ by
\begin{center}
$f(e^{r,even}_{b,j})=\beta_1+B^{even}_{b-1}-(\alpha-\beta)+A^{odd}_a+C^{odd}_c-c+s_1+\frac{j}{2}.$
\end{center}

\noindent{\bf Step 7.} If $d\ge 1$, for $i\in[d]$ and $j\in[2z_i]_{odd}$, label 
the odd edges of the even paths in $L$ by
\begin{center}
$f(e^{l,even}_{i,j})=\beta_1+B^{even}_{b}-(\alpha-\beta)+A^{odd}_a+C^{odd}_c-c+s_1+D^{odd}_{i-1}+\frac{j+1}{2}.$
\end{center}

\noindent{\bf Step 8.} If $\alpha>\beta=b_2$, for $i\in[\beta+1, \alpha]$, label the edges $e^{r,even}_{i,1}$ in $R$ by
\begin{center}
$f(e^{r,even}_{i,1})=\beta_1+B^{even}_{b}-(\alpha-\beta)+A^{odd}_a+C^{odd}_c-c+s_1+D^{odd}_{d}+(i-\beta)$.
\end{center}
Note that, for $\beta+1\leq i\leq \alpha$, $e^{r,even}_{i,2}$ and $e^{r,even}_{i,3}$ on $P_{2y_i}$ are two incident edges unlabeled yet.

\noindent{\bf Step 9.} If $\beta\ge 1$, then $t>\beta$. 
Recall that we have labeled $\beta-1$ paths of length one in $L$ at Step 1. Now
label the remaining edges $e^i$ in $L$ by
\begin{center}
$f(e^i)=\beta_1+B^{even}_{b}+A^{odd}_a+C^{odd}_c-c+s_1+D^{odd}_{d}+(i-\beta_1)$ for $i\in[\beta, t]$.
\end{center}

\noindent{\bf Step 10.} If $\beta\ge 1$, for $i\in[\beta]$, label the pendant edges of the $P_2$'s in $R$ by
\begin{center}
$f(e^{r,even}_{i,2})=B^{even}_{b}+A^{odd}_a+C^{odd}_c-c+s_1+D^{odd}_{d}+t+(\beta+1-i)$.
\end{center}

\noindent{\bf Step 11.} If $\alpha>\beta$, for $i\in[\beta+1, \alpha]$ and $j\in [3, 2y_i]_{odd}$, label 
the odd edges of the even paths in $R$ by
\begin{center}
$f(e^{r,even}_{i,j})=B^{even}_{b}+A^{odd}_a+C^{odd}_c-c+s_1+D^{odd}_{d}+t+B^{odd}_{i-1}-(i-(\beta+1))+\frac{j-1}{2}$.
\end{center}
Moreover, if $b-1>\alpha$, for $i\in [\alpha+1, b-1]$ and $j\in [2y_i]_{odd}$, label $e^{r,even}_{i,j}$ by
\begin{center}
$f(e^{r,even}_{i,j})=B^{even}_{b}+A^{odd}_a+C^{odd}_c-c+s_1+D^{odd}_{d}+t+B^{odd}_{i-1}-(\alpha-\beta)+\frac{j+1}{2}$.
\end{center}

\noindent{\bf Step 12.} If $a\ge 1$, for $i\in [a]$ and $j\in [2x_i]_{even}$, label the even edges of odd paths in $R$ by
\begin{center}
$f(e^{r,odd}_{i,j})=B^{all}-y_b-(\alpha-\beta)+A^{odd}_a+C^{odd}_c-c+s_1+D^{odd}_{d}+t+A^{even}_{i-1}+\frac{j}{2}$.
\end{center}

\noindent{\bf Step 13.} If $c\ge 1$, for $i\in [c]$ and $j\in [2w_i]_{even}$, label the even edges of odd paths in $L$ by
\begin{center}
$f(e^{l,odd}_{i,j})=B^{all}-y_b-(\alpha-\beta)+A^{all}+C^{odd}_c-c+s_1+D^{odd}_{d}+t+C^{even}_{i-1}+\frac{j}{2}$.
\end{center} 

\noindent{\bf Step 14.} If $s\ge 2$, label the edges in $P^{core}$ by  

\begin{eqnarray*}
           f(e_j) &=& B^{all}-y_b-(\alpha-\beta)+A^{all}+C^{all}-c+s_1+D^{odd}_{d}+t \\
            && +\left\{
\begin{array}{lll}
\frac{s+1-j}{2},& \hbox{ for }  j\in[s]_{odd},& \hbox{when $s$}\hbox{ is even}. \\
\frac{j}{2},& \hbox{ for } j\in[s]_{even},& \hbox{when $s$}\hbox{ is odd}.
\end{array}      
                 \right.
\end{eqnarray*}

\noindent{\bf Step 15.} For $j\in[2y_b]_{odd}$, label the odd edges of the $b$-th even path in $R$ by
\begin{center}
$f(e^{r,even}_{b,j})=B^{all}-y_b-(\alpha-\beta)+A^{all}+C^{all}-c+s-s_2+D^{odd}_{d}+t+\frac{j+1}{2}$.
\end{center}

\noindent{\bf Step 16.} If $d\ge 1$, for $i\in [d]$ and $j\in [2z_i]_{even}$, 
label the even edges of the even paths in $L$ by
\begin{center}
$f(e^{l,even}_{i,j})=B^{all}-(\alpha-\beta)+A^{all}+C^{all}-c+s-s_2+D^{odd}_{d}+t+D^{even}_{i-1}+\frac{j}{2}$.
\end{center}

\noindent{\bf Step 17.} If $\alpha>\beta$, for $i\in[\beta+1, \alpha]$, label the unlabeled edges $e^{r,even}_{i,2}$ by  
\begin{center}
$f(e^{r,even}_{i,2})=B^{all}-(\alpha-\beta)+A^{all}+C^{all}-c+s-s_2+D^{all}+t+(i-\beta)$.
\end{center}

\noindent{\bf Step 18.} If $c\ge 1$, for $i\in [c]$, label $e^{l,odd}_{i,2w_i+1}$ by
\begin{center}
$f(e^{l,odd}_{i,2w_i+1})=B^{all}+A^{all}+C^{all}-c+s-s_2+D^{all}+t+i$.
\end{center}

\noindent{\bf Step 19.}  Label the remaining edges in $P^{core}$ by the following rules: 
If $s=1$ or $s$ is even, then let $f(e_s)=m$; otherwise, let $f(e_1)=m-1$ and $f(e_s)=m$.

Next, we prove that  $f$ is strongly antimagic.

{\bf Claim:} $\varphi_f(u)\ge \varphi_f(v)$ for any $u\in V_2$ and $v\in V_1$.

Observe that all pendent edges have been labeled at Step 9 or Step 10. For the former case, 
$\beta=0$ and there is an unlabeled edge in $E(u)$ for every $u\in V_2$ at the end of Step 8. 
Hence the claim holds.
For the latter case, observe that the edge $e^{r,even}_{1,1}$ has the largest label among all pendent edge. 
Hence the largest vertex sum of all pendent vertices is $f(e^{r,even}_{1,1})$. 
Let us check the vertex sum of a vertex in $V_2$. For $1\le i\le \beta$, the vertex sum at $v^{r,even}_{i,1}=f(e^{r,even}_{i,1})+f(e^{r,even}_{i,2})
=(2i-1)+(B^{even}_{b}+A^{odd}_a+C^{odd}_c-c+s_1+D^{odd}_{d}+t+(\beta+1-i))$ is increasing in $i$.
For any other vertex $u\in V_2$, by our labeling rules, 
we can find one edge $e'\in E(u)$ with $f(e')>f(e^{r,even}_{\beta,1})$ and the other edge $e''\in E(u)$ with $f(e'')>f(e^{r,even}_{\beta,2})$. 
Thus, the smallest vertex sum of a vertex in $V_2$ happens at $v^{r,even}_{1,1}$, and is greater than the vertex sum of any pendent vertex.

{\bf Claim:} $\varphi(u)$ are all distinct for $u\in V_2$.

We have already showed that the vertex sums satisfy
$\varphi_f(v^{r,even}_{1,1})<\varphi_f(v^{r,even}_{2,1})<
\ldots<\varphi_f(v^{r,even}_{\beta,1})< \varphi_f(u)$
for $u\in V_2-\{v^{r,even}_{1,1},v^{r,even}_{2,1},\ldots, v^{r,even}_{\beta,1}\}$.
For other vertices $u'$ and $u''$ in $V_2$, let $E(u')=\{e^1_{u'}, e^2_{u'}\}$ and $E(u'')=\{e^1_{u''}, e^2_{u''}\}$.
Assume $f(e^1_{u'})<  f(e^2_{u'})$ and $f(e^1_{u''})<  f(e^2_{u''})$.
Our labeling rules give that 
if $f(e^1_{u'})\le  f(e^1_{u''})$, then $f(e^2_{u'})\le  f(e^2_{u''})$, and at least 
one of the inequalities is strict. This guarantees that 
$\varphi_f(u)$ are distinct for all $u\in V_2$.

For  $\varphi_f(v_l)>\varphi_f(v_r)>\varphi_f(u)$ for any $u\in V_2$, see Appendix.
\qed

Lemma \ref{deqL equals 3} and Lemma \ref{R has two P1} could be proven 
by the same labeling rules.

\noindent{\bf Proof of Lemma \ref{deqL equals 3} and Lemma \ref{R has two P1}.}
First, We use Lemma \ref{lm1} and Lemma \ref{Right minus 1} to do some reductions.

Given a double spider $DS(L,P^{core}, R)$ described in Lemma \ref{deqL equals 3}, 
let us first consider $h=\min \{j| P_j \in R\cup L\}$. 
Since $\deg(v_l)=\deg(v_r)=3$, without loss of generality, 
we assume the number of $P_h$ in $R$ is greater than or equal to that in $L$.
By Lemma \ref{lm1}, it follows that we only need to show that the double spider is strongly antimagic 
for $h=1$. 

Given a double spider $DS(L,P^{core}, R)$ described in Lemma \ref{R has two P1}, 
we remove all but two $P_1$s in $R$.
Moreover, if there are some $P_1$ in $L$ as well, we remove them as many as possible
unless one of the three situation happens: $L$ contains no $P_1$'s, or 
$L$ consists of exactly two $P_1$'s, 
or $L$ consists of exactly one $P_1$ and one path of length at least two.
By Lemma \ref{Right minus 1}, if the resulting double spider is strongly antimagic,
then $DS(L,P^{core}, R)$ is also strongly antimagic.

Every reduced double spider belongs to at least one of the three types:

\begin{description}
\item{(a)} $\deg(v_l)=\deg(v_r)=3$, $t=2$, $a=2$, and $x_1=x_2=0$; or 
\item{(b)} $\deg(v_l)=\deg(v_r)=3$, $t\le 1$, $a\geq 1$, and $x_1=0$; or  
\item{(c)} $\deg(v_l)\ge  \deg(v_r)=3$, $t=0$, $a=2$,  and $x_1=x_2=0$.
\end{description}

Now we show each type of double spiders above is strongly antimagic.
If a double spider is of type (a), then the total number of edges $m=s+4$.
When $s$ is odd, we give the labeling $f$ as follows:
$f(e_j)=\frac{(s+1)-j}{2}$ for $j\in [s]_{even}$,
$f(e^{r,odd}_{1,1})=\frac{s-1}{2}+1$, $f(e^{r,odd}_{2,1})=\frac{s-1}{2}+2$, 
$f(e^1)=\frac{s-1}{2}+3$, $f(e^2)=\frac{s-1}{2}+4$, and 
$f(e_j)=\frac{s-1}{2}+4+\frac{(s+2)-j}{2}$ for $j\in [s]_{odd}$.
For this labeling, we have vertex sums 
$\varphi_f(v_l)=2s+10$, $\varphi_f(v_r)=\frac{3s+13}{2}$,
$\varphi_f(v_j)=\frac{3s+11-2j}{2}$ for $2\le j\le s$, and 
$\varphi_f(v)\in\{\frac{s+1}{2},\frac{s+3}{2},\frac{s+5}{2},\frac{s+7}{2}\}$ if $\deg(v)=1$.

When $s$ is even, we give the labeling $f$ as follows:
$f(e_j)=\frac{j}{2}$ for $j\in [s]_{even}$,
$f(e^1)=\frac{s}{2}+1$, $f(e^2)=\frac{s}{2}+2$, 
$f(e^{r,odd}_{1,1})=\frac{s}{2}+3$, $f(e^{r,odd}_{2,1})=\frac{s}{2}+4$, 
and $f(e_j)=\frac{s}{2}+4+\frac{j+1}{2}$ for $j\in [s]_{odd}$.
For this labeling, we have vertex sums 
$\varphi_f(v_l)=\frac{3s+20}{2}$, $\varphi_f(v_r)=\frac{3s+10}{2}$,
$\varphi_f(v_j)=\frac{s+8+2j}{2}$ for $j\in[2,s]$, and 
$\varphi_f(v)\in\{\frac{s+2}{2},\frac{s+4}{2},\frac{s+6}{2},\frac{s+8}{2}\}$ if $\deg(v)=1$.

It is easy to see the labelings are strongly antimagic.

For a double spider of type (b) or (c), 
we will give the rules to label the edges by $1,2,\ldots, m$ accordingly. 
Our rules will produce a strongly antimagic labeling 
except for the double spider is isomorphic to the following ones:

\begin{figure}[h]
\begin{center}
\begin{picture}(150,60)
\put(10,10){\circle*{4}}
\put(10,50){\circle*{4}}
\put(-10,50){\circle*{4}}
\put(-30,50){\circle*{4}}
\put(30,30){\circle*{4}}
\put(50,30){\circle*{4}}
\put(70,30){\circle*{4}}
\put(90,10){\circle*{4}}
\put(90,50){\circle*{4}}
\put(30,30){\line(1,0){40}}
\put(10,50){\line(1,-1){20}}
\put(10,50){\line(-1,0){40}}
\put(10,10){\line(1,1){20}}
\put(70,30){\line(1,-1){20}}
\put(70,30){\line(1,1){20}}
\end{picture}
\quad
\begin{picture}(100,60)
\put(10,10){\circle*{4}}
\put(10,50){\circle*{4}}
\put(-10,50){\circle*{4}}
\put(-30,50){\circle*{4}}
\put(30,30){\circle*{4}}
\put(50,30){\circle*{4}}
\put(70,30){\circle*{4}}
\put(90,10){\circle*{4}}
\put(90,50){\circle*{4}}
\put(30,30){\line(1,0){40}}
\put(10,50){\line(1,-1){20}}
\put(10,50){\line(-1,0){40}}
\put(10,10){\line(1,1){20}}
\put(70,30){\line(1,-1){20}}
\put(70,30){\line(1,1){20}}
\put(20,40){7}
\put(19,10){5}
\put(36,32){3}
\put(56,32){8}
\put(74,40){1}
\put(75,10){4}
\put(-4,40){2}
\put(-24,40){6}
\end{picture}
\end{center}
\caption{The double spider with $L=\{P_3,P_1\}$, $R=\{2P_1\}$, $P^{core}=P_2$}\label{fig 2}
\end{figure}
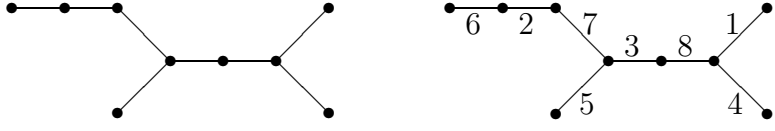

We construct a strongly labeling separately in the right graph of Figure~\ref{fig 2}.

Note that for the two types of double spiders, $R$ contains only two paths and one of them has length one. 
For convenience, we will denote the two paths in $R$ by 
$P_1 =v_rv^{r}_{1,1}=e^{r}_{1,1}$ and $P_k=v_rv^{r}_{2,1}v^{r}_{2,2}\cdots v^{r}_{2,k}$ with   
$e^{r}_{2,j}=v^{r}_{2,j-1}v^{r}_{2,j}$ and $e^{r}_{2,1}=v_rv^{r}_{2,1}$.

The following are our rules to label the double spiders of type (b) and (c):

\noindent{\bf Step 1.} If $k\ge 2$, label all even edges of $P_k$ in $R$ by 
\begin{center}
$f(e^{r}_{2,j})=\lfloor\frac{k+2-j}{2}\rfloor$, for  $j\in [k]_{even}$.
\end{center}

\noindent{\bf Step 2.} If $c\ge 1$,  label all odd edges of $P_{2w_{i}+1}$ in $L$, except for $e^{l,odd}_{1,2w_{1}+1}$, by
\begin{center}
$f(e^{l,odd}_{1,j})=\lfloor\frac{k}{2}\rfloor+\frac{j+1}{2}, \hbox{ for }j\in [2w_{1}-1]_{odd}$,
\end{center} 
and for $i\in[2,c]$ and $j\in[2w_{i}+1]_{odd}$, let
\begin{center}
$f(e^{l,odd}_{i,j})=\lfloor\frac{k}{2}\rfloor+C^{odd}_{i-1}-1+\frac{j+1}{2}$.
\end{center}
Moreover, we define $w'=-1$ when $c\ge 1$, otherwise $w'=0$.
Then we have $\lfloor\frac{k}{2}\rfloor+c+w'+D^{odd}_d+t\ge 1$.

\noindent{\bf Step 3.}
If $s\ge 4$, label the edges of $P^{core}$ by,
\begin{center}
$f(e_j)=\lfloor\frac{k}{2}\rfloor+C^{odd}_c+w'+\left\{
\begin{array}{ll}
\frac{s-j}{2}, \hbox{ for }j\in[2, s-2]_{even}, & \hbox{when $s$}\hbox{ is even}. \\
\frac{j-1}{2}, \hbox{ for }j\in[3, s-2]_{odd}, & \hbox{when $s$}\hbox{ is odd}.
                                                        \end{array}
                                                    \right.$
\end{center}
As before, we labeled $s_1=\lfloor\frac{|s-2|}{2}\rfloor$ edges of the core path $P_s$.

\noindent{\bf Step 4.} If $d\ge 1$, for $i\in[d]$ and $j\in[ 2z_i]_{odd}$,  
label the odd edges of $P_{2z_i}$ by
\begin{center}
$f(e^{l,even}_{i,j})=\lfloor\frac{k}{2}\rfloor+C^{odd}_c+w'+s_1+D_{i-1}^{odd}+\frac{j+1}{2}$.
\end{center}
Next, we label edges of the paths of length one in $L$ and $R$. 
We have to slightly adjust the labeling orders for different cases.  
Let \[t'=\left\{
                          \begin{array}{ll}
                            1, & \hbox{if $t=1,d=1$ or $t=1,s=2,c=1,w_1=1,k\geq 2$;} \\
                            0, & \hbox{otherwise;}
                          \end{array}
                        \right.\]
Observe that if $t'=1$, then $\lfloor\frac{k}{2}\rfloor+D^{odd}_d\ge 1$.

\noindent{\bf Step 5.} We label $e^{1}$ (it does not exsit if $t=0$)
and $e^{r}_{1,1}$ in different order according to the number $t'$.
If $t'=1$, we label $e^{r}_{1,1}$ and $e^{1}$ by
\begin{eqnarray}\label{c11 9}
f(e^{r}_{1,1}) &=& \hbox{$\lfloor\frac{k}{2}\rfloor$}+C^{odd}_c+w'+s_1+D^{odd}_d+1. \\
\notag f(e^{1}) &=& \hbox{$\lfloor\frac{k}{2}\rfloor$}+C^{odd}_c+w'+s_1+D^{odd}_d+2.
\end{eqnarray}
Else, $t'=0$, then we label $e^{1}$ and $e^{r}_{1,1}$ by
\begin{eqnarray}\label{c11 8}
\notag f(e^{1}) &=& \hbox{$\lfloor\frac{k}{2}\rfloor$}+C^{odd}_c+w'+s_1+D^{odd}_d+t. \\
f(e^{r}_{1,1})&=& \hbox{$\lfloor\frac{k}{2}\rfloor$}+C^{odd}_c+w'+s_1+D^{odd}_d+t+1.
\end{eqnarray}
In this step, $f(e^1)$ is undefined when $t=0$.

\noindent{\bf Step 6.} Label all the odd edges of $P_k$, $j\in[k]_{odd}$, in $R$ by
\begin{equation}\label{c11 10}
  f(e^{r}_{2,j})=\hbox{$\lfloor\frac{k}{2}\rfloor+C^{odd}_c+w'+s_1+D^{odd}_d+1+t+\lceil\frac{k+1-j}{2}\rceil$}.
\end{equation}

\noindent{\bf Step 7.} If $c\ge 1$, for $i\in [c]$  and  $j\in [2w_{i}]_{even}$,  label the even edges of $P_{2w_i+1}$ in $L$ by
\begin{center}
$f(e^{l,odd}_{i,j})=k+1+C^{odd}_c+w'+s_1+D^{odd}_d+t+C_{i-1}^{even}+\frac{j}{2}$.
\end{center}

\noindent{\bf Step 8.} If $s\ge 2$, label the edges in $P^{core}$  by 

\begin{equation}\label{c11 12}
  f(e_j)=k+1+C^{all}+w'+D^{odd}_d+s_1+t+\left\{
                                                          \begin{array}{lll}
                                                            \frac{s+1-j}{2},& \hbox{for }j\in[s]_{odd}, & \hbox{when }s\hbox{ is even}; \\
                                                            \frac{j}{2},& \hbox{for }j\in [s]_{even}, & \hbox{when }s\hbox{ is odd};
                                                          \end{array}\right.
\end{equation}
Let $s_2$ be the number of unlabeled edges on $P^{core}$. So 
$s_2=1$, if $s=1$ or $s$ is even, otherwise $s_2=2$. 

\noindent{\bf Step 9.} If $d\ge 1$, for $j\in [2z_i]_{even}$ and $ i\in[d]$, label the even edges of $P_{2z_i}$ in $L$ by
\begin{center}
$f(e^{l,even}_{i,j})=k+1+C^{all}+w'+s-s_2+D^{odd}_d+t+D_{i-1}^{even}+\frac{j}{2}$.
\end{center}

\noindent{\bf Step 10.} If $c\ge 1$, label the edge $e^{l,odd}_{1,2w_1+1}$ left at Step 2 by 
\begin{center}
$f(e^{l,odd}_{1,2w_1+1})=m-s_2$.
\end{center}

\noindent{\bf Step 11.} Label the remaining edges in $P^{core}$ by the following rules:\\
If $s=1$ or $s$ is even, then let $f(e_s)=m$;
otherwise, let $f(e_1)=m-1$ and $f(e_s)=m$.

We prove $f$ is a strongly antimagic labeling. 

{\bf Claim:} $\varphi_f(v)> \varphi_f(u)$ for any $v\in V_2$ and $u\in V_1$.

Observe that either all pendent edges were labeled 
before Step 6, or there exists exactly one pendent edge labeled at Step 6, 
when $k$ is odd and $k\ge 3$.
In the former case, for every $v \in V_2$, 
there is an edge in $E(v)$ not labeled yet at the 
beginning at Step 6. This promises that $\varphi_f(v)> \varphi_f(u)$ for any 
$u\in V_1$.
In the latter case, we label the pendent edge of $P_k$ by $\lfloor\frac{k}{2}\rfloor+C^{odd}_c+w'+s_1+D^{odd}_d+t+2$
at Step 6, and it is equal to $\varphi_f(v^{r}_{2,k})$.  
Moreover, every vertex $v\in V_2$, except for $v^{r}_{2,k-1}$, is incident 
to an edge of label greater than $\lfloor\frac{k}{2}\rfloor+C^{odd}_c+w'+s_1+D^{odd}_d+t+2$. 
This also leads $\varphi_f(v)> \varphi_f(u)$ for any vertex $v\in V_2$ and $u\in V_1$. 

{\bf Claim:} $\varphi(u)$ are all distinct for $u\in V_2$.

For any two vertices $u'$ and $u''$ in $V_2$, let $E(u')=\{e^1_{u'}, e^2_{u'}\}$ and $E(u'')=\{e^1_{u''}, e^2_{u''}\}$.
Assume $f(e^1_{u'})<  f(e^2_{u'})$ and $f(e^1_{u''})<  f(e^2_{u''})$.
Our labeling rules give that 
if $f(e^1_{u'})\le  f(e^1_{u''})$, then $f(e^2{u'})\le  f(e^2_{u''})$, and at least 
one of the inequalities is strict. This guarantees that 
$\varphi_f(u)$ are distinct for all $u\in V_2$.

For  $\varphi_f(v_l)>\varphi_f(v_r)>\varphi_f(u)$ for any $u\in V_2$, see Appendix.
\qed

\section{Conclusion and Future Work}

In general, given an antimagic graph $G$, there exist many antimagic labelings on $G$.
Some of the labelings are not strongly antimagic.
Thus, finding a strongly antimagic labeling of a graph could be more difficult
than finding a general antimagic labeling.
In fact, we do not know if there exists a strongly antimagic labeling for every antimagic graph .
However, if a graph is strongly antimagic, then we can use Lemma~\ref{lm1}
to construct a larger graph which is not only antimagic but also strongly antimagic .
It would be helpful to tackle the antimagic labeling problem
if we have more constructive methods like that.
For example, Lemma~\ref{lm1} can be generalized to the following theorem.
\begin{theorem}
Let $G$ be a strongly antimagic graph and $V_k=\{v\in V\mid \deg(v)=k\}$.
If for each vertex in $V_k$, we attach an edge to it,
then the resulting graph is also strongly antimagic.
\end{theorem}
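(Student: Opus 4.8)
The plan is to generalize the construction used for Lemma~\ref{lm1} almost verbatim, replacing the leaf set $V_1$ by $V_k$. Write $V_k=\{v_1,v_2,\ldots,v_i\}$, indexed so that $\varphi_f(v_1)<\varphi_f(v_2)<\cdots<\varphi_f(v_i)$ (possible since $f$ is antimagic and these vertices share the common degree $k$), and let $v_1',\ldots,v_i'$ be the attached leaves joined by new edges $v_jv_j'$. If $V_k=\varnothing$ there is nothing to prove, so assume $i\ge 1$. Define $f'$ exactly as in Lemma~\ref{lm1}: set $f'(v_jv_j')=j$ for $1\le j\le i$ and $f'(e)=f(e)+i$ for every old edge $e$. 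The three relevant vertex sums are then $\varphi_{f'}(v_j')=j$ for a new leaf, $\varphi_{f'}(u)=\varphi_f(u)+i\deg_G(u)$ for an old vertex $u\notin V_k$, and $\varphi_{f'}(v_j)=\varphi_f(v_j)+ik+j$ for a promoted vertex $v_j\in V_k$, whose degree rises from $k$ to $k+1$.

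I would then verify the strongly antimagic property by a short case analysis organised by degree in $G'$. The degree-one vertices are the new leaves, with sums $1,\ldots,i$, together with the original leaves when $k\ge 2$, with sums $\varphi_f(w)+i\ge i+1$; since every old vertex has sum at least $i+1>i$, the new leaves carry the $i$ smallest sums, all distinct, while the original leaves keep their distinct sums in the disjoint range above $i$, so every degree-one comparison is immediate. For two vertices in a common degree class untouched by the promotion (old degree $d\notin\{k,k+1\}$) the sums $\varphi_f(x)+id$ stay distinct because the $\varphi_f$-values are, and for two old vertices in different such classes the uniform shift by $i\deg_G$ preserves the strict degree-ordering inherited from the strong antimagicity of $f$.

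The genuine new difficulty, and the step I expect to be the crux, is the degree-$(k+1)$ class of $G'$, which now merges two kinds of vertices: the promoted $v_j$ (old degree $k$) and the vertices that already had degree $k+1$ in $G$. For such an original vertex $u$ one has $\varphi_{f'}(u)=\varphi_f(u)+i(k+1)$, so
\[
\varphi_{f'}(u)-\varphi_{f'}(v_j)=\bigl(\varphi_f(u)-\varphi_f(v_j)\bigr)+(i-j).
\]
Here $\deg_G(v_j)=k<k+1=\deg_G(u)$ forces $\varphi_f(u)-\varphi_f(v_j)\ge 1$ by the strong antimagic property of $f$, while $i-j\ge 0$ because $j\le i$; hence every promoted vertex has a strictly smaller sum than every original degree-$(k+1)$ vertex. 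This single inequality simultaneously yields distinctness within the merged class and shows the two blocks do not interleave. Together with $\varphi_{f'}(v_1)<\cdots<\varphi_{f'}(v_i)$ (both summands increase with $j$) and the distinctness of the original degree-$(k+1)$ sums inherited from $\varphi_f$, the whole degree-$(k+1)$ class is settled.

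It remains only to cross-compare a promoted vertex $v_j$ with an old vertex $q$ of another degree, which is routine: if $\deg_G(q)\ge k+2$ then $\varphi_{f'}(q)\ge \varphi_f(q)+i(k+2)$ dominates $\varphi_{f'}(v_j)\le\varphi_f(v_j)+ik+i$ using $\varphi_f(v_j)<\varphi_f(q)$, and if $\deg_G(q)\le k-1$ the reverse inequality follows the same way. In each case the gap of at least $1$ supplied by the strong antimagicity of $f$ in $G$ exactly absorbs the bounded perturbation $j-i$ introduced by the new pendant labels. Assembling these comparisons shows that $f'$ is a strongly antimagic labeling of the resulting graph, completing the proof.
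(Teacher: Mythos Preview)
Your proof is correct and follows exactly the approach the paper indicates: the paper's own ``proof'' consists only of the remark that it is identical to Lemma~\ref{lm1}, namely shift every old label up by $|V_k|$ and assign $1,\ldots,|V_k|$ to the new pendant edges in the order of the $\varphi_f$-values on $V_k$. You have simply written out in full the case analysis that the paper leaves implicit, including the key merged degree-$(k+1)$ class, and every step checks out.
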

The proof of the above theorem is exactly the same as Lemma~\ref{lm1}. First add $|V_k|$ to
the label of each edge in $E$ when the strongly antimagic labeling is given,
then label the new edges by $1,\ldots, |V_k|$
according to the order of the vertex sums of the vertices in $V_k$.
For antimagic graphs, we ask the following questions.

\begin{question}\label{q1}
Does there exist a strongly antimagic labellings for every antimagic graph?
\end{question}

In 2008 , Wang and Hsiao~\cite{W2008} introduced the {\em $k$-antimagic labeling} on a graph $G$, which
is a bijection $f$ from $E(G)$ to $\{k+1,\ldots,k+|E(G)|\}$ for an integer $k\ge 0$
such that the vertex sums $\varphi_f(v)$ are distinct over all vertices.
We call a graph {\em $k$-antimagic} if it has a $k$-antimagic labeling.
The purpose of studying such kind of labelings is to apply them for
finding the antimagic labelings of the Cartesian product of graphs.
Wang and Hsiao also pointed out that
if the antimagic labeling $f$ of a graph $G$ has the property
that the order of vertex sums is consistent with the order of degrees,
then $G$ is $k$-antimagic for any $k\ge 0$.
This property on the vertex sums is exactly the same definition of the strongly antimagic labeling in our article.
In fact, all the $k$-antimagic labelings studied in~\cite{W2008}
are derived from the strongly antimagic labeling of the graph with a translation on labels.
Hence all those $k$-antimagic labelings have the ``strong property'': $\varphi_f(v)<\varphi_f(u)$ whenever $\deg(u)< \deg(v)$.
\begin{question}\label{q2}
Is there a $k$-antimagic graph but not $(k+1)$-antimagic?
\end{question}
Note that if the answer of Question~\ref{q2} is yes for some graph $G$,
then every $k$-antimagic labeling on $G$ does not have the above strong property on the vertex sums and the degrees.
Moreover, $G$ is a negative answer for Question~\ref{q1} if $k=0$.

{\bf Remark.} There is a different version of $k$-antimagic labeling studied in \cite{Dan2005,W2012}.
They consider injections from $E(G)$ to $\{1,2,...,|E(G)|+k\}$ such that all vertex sums are pairwise distinct.

Recall that the set $V_k$ of a graph consists of vertices of degree $k$.
For any graph, let $V_{\ge 3}$ be the set of vertices of degree at least three.
Kaplan, Lev and Roditty~\cite{KLR2009} proved that for a tree, if the set $|V_2|\le 1$, then it is antimagic.
Our strongly antimagic double spiders together with the known results on spiders and paths
can be rephrased as following: For a tree, if the set $|V_{\ge 3}|\le 2$, then it is antimagic.
If we have both large $V_2$ and $V_{\ge 3}$ in the tree, then the problem turns out to be more difficult. We explain the reasons.
Note that $|V_1|$ must be larger than $|V_{\ge 3}|$ by the simple fact that the average degree
of a tree is less than two.
Hence, large $V_2$ and $V_{\ge 3}$ leads to large $V_2$ and $V_1$.
If we label the edges at random, then the vertex sum of a vertex in $V_2$ has fifty percent likelihood to be smaller than $|E|$,
which is very likely to coincide with the vertex sums of vertices in $V_1$.
A very recently result~\cite{LMS2017} is that for a caterpillar, if $|V_1|\ge \frac{1}{2}(3(|V_2|+|V_{\ge 3}|+1))$, then it is antimagic.
Until the paper is completed, we do not have an affirmative answer of Conjecture~\ref{g2} for all caterpillars yet.

\section*{Acknowledgment}

The first and third authors would like to thank Alfr\'{e}d R\'{e}nyi Institution of Mathematics for host on August, 2017, in Hungary.

\clearpage

\section{Appendix}

\subsection{Rest of the Proof of Lemma \ref{deqL equals 3} and Lemma \ref{R has two P1}. }

{\bf Claim:} $\varphi_f(v_r)>\varphi_f(u)$ for any $u\in V_2$.

Let $u_2$ be the vertex in $V_2$ with the largest vertex sum. 
If $s=1$, we have 
\[f(e^{r}_{1,1})+f(e^{r}_{2,1}) \geq m-1,\]
by Equalities (\ref{c11 9}) or (\ref{c11 8}), and (\ref{c11 10}).
Moreover, $s=1$ implies that we label $m$ to the core edge which incident to $v_r$ and $v_l$.  
So $\varphi_f(u_2)\le (m-1)+(m-2) <2m-1\le f(e_1)+f(e_{1,1}^{r})+f(e_{2,1}^{r})=\varphi_f(v_r)$.

If $s\ge 2$, then $u_2=v_s$ since it is incident to $e_s$, the last labeled edge. 
By Equalities (\ref{c11 10}) and (\ref{c11 12}),
we have 
\[f(e_{s-1})=f(e^{r}_{2,1})+C^{even}_c+
\left\{
                       \begin{array}{ll}
                         1, & \hbox{if $s$ is even;} \\
                         \frac{s-1}{2}, & \hbox{if $s$ is odd;}
                       \end{array}
                     \right.
  \]
	
Recall that $\lfloor\frac{k}{2}\rfloor+c+w'+D^{odd}_d+t>0$.
In Equation (\ref{c11 8}),

we have
\begin{align*}
f(e^{r}_{1,1})&\ge  \lfloor\frac{k}{2}\rfloor+C^{odd}_c+w'+s_1+D^{odd}_d+t+1\\
&=s_1+C^{even}_c+1+(\lfloor\frac{k}{2}\rfloor+c+w'+D^{odd}_d+t)\\
&>s_1+C^{even}_c+1\ge f(e_{s-1})-f(e^r_{2,1}),
\end{align*}
and hence
\begin{center}
$\varphi_f(v_r)=f(e^r_{1,1})+f(e^r_{2,1})+f(e_s)
>f(e_{s-1})+f(e_s)=\varphi_f(u_2).$
\end{center}

\noindent{\bf Claim:} $\varphi_f(v_l)>\varphi_f(v_r)$.

When $t'=1$, we have $f(e^1)=f(e^{r}_{1,1})+1$ by the rules in Step 5. 
Thus, $\varphi_f(v_r)=f(e_s)+f(e^r_{1,1})+f(e^r_{2,1})=(m-1)+f(e^1)+f(e^r_{2,1})$. 
Note that $m-1$ is assigned to an edge $e$ at Step 9, or Step 10, or Step 11.
So,  $e\in E(v_l)\setminus \{e^1\}$.

If $f(e_1)=m-1$ ($s$ is odd and greater than 3), then there exists an edge in $E(v_l)$ 
labeled at Step 9 or 10, 
whose label is $m-2$ and greater than $f(e^r_{2,1})$.
So 
\[\varphi_f(v_l)> (m-1)+f(e^1)+f(e^r_{2,1})=\varphi_f(v_r).\]

If $f(e)=m-1$ for some $e\in E(v_l)-\{e_1,e^1\}$,
then $e_1$ is labeled at Step 11 when $s=1$ or it is labeled at Step 8 when $s$ is even.
In the former case, we have $f(e^r_{2,1})< m-1$ and hence
\[\varphi_f(v_l)\ge f(e^1)+2m-1>f(e^r_{2,1})+f(e^r_{1,1})+m=\varphi_f(v_r).\]
In the latter case, we have $f(e_1)>f(e^r_{2,1})$, and hence 
\[\varphi_f(v_l)\ge f(e^1)+f(e_1)+m-1>f(e^r_{2,1})+f(e^r_{1,1})+m=\varphi_f(v_r).\]

When $t'=0$, recall that $\deg(v_l)=c+d+t+1\ge 3$. 
We classify the possible values of $c$, $d$, and $t$.
\begin{description}
  \item{Case 1.} $c+d\ge 2$. 
	\begin{description}
	\item{Subcase 1.1.} $d\ge 1$.

	Then we can pick two edges $e',e''\in E(v_l)$ labeled at Step 9 and Step 10, whose labels are both greater than 
	$f(e^r_{2,1})$ and $f(e^r_{1,1})$. If $f(e_1)\in\{m,m-1\}$, then
	\[\varphi_f(v_l)\ge f(e')+f(e'')+f(e_1)
	>f(e^r_{2,1})+f(e^r_{1,1})+f(e_s)= \varphi_f(v_r).\] 
	If $f(e_1)\not\in\{m,m-1\}$, then we use $f(e_1)>f(e^r_{1,1})$, and hence
	\[\varphi_f(v_l)\ge (m-1)+(m-2)+f(e_1)>m+f(e^r_{2,1})+f(e^r_{1,1}).\]

	\item{Subcase 1.2.} $d=0$.

If $s=1$, we have $f(e^{l,odd}_{c,2w_c+1})+1=f(e^{r}_{1,1})$. Moreover, 
$f(e^{l,odd}_{1,2w_1+1})- f(e^{r}_{2,1})\geq 2$ since $C^{even}_c\ge 2$.
Therefore, 
\begin{eqnarray*}
\varphi_f(v_l)&\ge& f(e_s)+f(e^{l,odd}_{1,2w_1+1})+f(e^{l,odd}_{c,2w_c+1})\\
              &>&f(e_s)+f(e^{r}_{2,1})+f(e^{r}_{1,1})\\
							&=&\varphi_f(v_r).
\end{eqnarray*}

If $s$ is odd and greater than 1, 
we have $f(e^{l,odd}_{c,2w_c+1})+\frac{s-3}{2}+1=f(e^{r}_{1,1})$ and
$f(e^{l,odd}_{1,2w_1+1}) - f(e^{r,odd}_{2,1})\ge C^{even}_c+\frac{s-1}{2}+1$.
Thus,
\begin{eqnarray*}
\varphi_f(v_l)&\ge& f(e_1)+f(e^{l,odd}_{1,2w_1+1})+f(e^{l,odd}_{c,2w_c+1})\\
               &\ge& (m-1)+[f(e^{r}_{1,1})-(\frac{s-3}{2}+1)]+[f(e^{r,odd}_{2,1})+ C^{even}_c+\frac{s-1}{2}+1]\\
							&>&f(e_s)+f(e^{r}_{2,1})+f(e^{r}_{1,1})\\
							&=&\varphi_f(v_r).
\end{eqnarray*}
If $s$ is even, we have $f(e^{l,odd}_{c,2w_c+1})+\frac{s-2}{2}+1=f(e^{r,odd}_{1,1})$, 
and by Equalities (\ref{c11 10}) and (\ref{c11 12}), we have
$f(e_1)-f(e^{r}_{2,1})=\frac{s}{2}+C^{even}_c\geq \frac{s}{2}+2$.
Then
\begin{eqnarray*}
\varphi_f(v_l)&\ge& f(e_1)+f(e^{l,odd}_{1,2w_1+1})+f(e^{l,odd}_{c,2w_c+1})\\
              &\ge& (f(e^{r}_{2,1})+\frac{s}{2}+2)+[f(e^{r,odd}_{1,1})-(\frac{s-2}{2}+1)]+(m-1)\\
							&>&f(e_s)+f(e^{r}_{2,1})+f(e^{r}_{1,1})\\
							&=&\varphi_f(v_r).
\end{eqnarray*}

\end{description}

  \item{Case 2.} $c+d=1$.

In this case, $t=1$ by the fact $d+c+t=\deg(v_l)\ge 3$ and reduction.
	\begin{description}
	\item{Subcase 2.1.} $c=1$ and $d=0$.
		
Since the special case of $c=1$, $d=0$, $t=1$, $w_1=1$, $s=2$ and $k=1$ has be handled separately
as illustrated in Figure~\ref{fig 2}, 
we may assume at least one of the conditions $w_1\ge 2$, $s\neq 2$, and $k\ge 2$ holds.
Because $t=1$, we have $f(e^1)+1=f(e^{r,odd}_{1,1})$. 

If $s=1$, by Equality (\ref{c11 10}), $f(e^{l,odd}_{1,2w_1+1})- f(e^{r}_{2,1})=w_1+1\geq 2$.
Then 
\[ \varphi_f(v_l)=
f(e^{l,odd}_{1,2w_1+1})+f(e^1)+f(e_1)> f(e^{r}_{2,1})+f(e^{r}_{1,1})+f(e_1)
=\varphi_f(v_r).\]

If $s$ is odd and greater than one, then $f(e_1)=m-1$. By Equalities (\ref{c11 10}) and (\ref{c11 12}), we have
$f(e^{l,odd}_{1,2w_1+1})- f(e^{r}_{2,1}) =w_1+\frac{s+1}{2}\geq 3$. Then 
\[ \varphi_f(v_l)=
f(e^{l,odd}_{1,2w_1+1})+f(e^1)+f(e_1)> f(e^{r}_{2,1})+f(e^{r}_{1,1})+f(e_s)
=\varphi_f(v_r).\]

If $s$ is even, then $f(e^{l,odd}_{1,2w_1+1})=m-1$.
By Equalities (\ref{c11 10}) and (\ref{c11 12}), we have
$f(e_1)- f(e^{r}_{2,1})=w_1+\frac{s}{2}\geq 3.$
Then 
\[ \varphi_f(v_l)=
f(e^{l,odd}_{1,2w_1+1})+f(e^1)+f(e_1)> f(e^{r}_{2,1})+f(e^{r}_{1,1})+f(e_s)
=\varphi_f(v_r).\]

	\item{Subcase 2.2.} $c=0$ and $d=1$.

This cannot happen since $t=1$ and $d=1$ will imply $t'=1$.
\end{description}

\end{description}

\subsection{Rest of the Proof of Lemma~\ref{R has odd path}.}

The conditions $\deg(v_r)\ge 3$ and $b=0$ imply $a\ge 2$.
Without loss of generality, assume the length of the $a$-th odd path in $L$ is at least 3. 
Since $\deg(v_l)\geq 4$, $s\geq 1$, $2x_a+1\geq 3$, and $2x_1+1\geq 1$, 
we have the total number of edges $m\geq D_d^{even}+z_d+7$.

We make some observations.
\begin{itemize}

\item At Step 5, if $t\geq 2$, we have
\begin{eqnarray}
  f(e^t)+f(e^{t-1}) &=& (A^{odd}_a-1+C^{odd}_c-c+s_1+D^{odd}_d+t)\notag \\
	&&+(A^{odd}_a-1+C^{odd}_c-c+s_1+D^{odd}_d+t-1)\notag \\
	&=&(A^{all}+a-2)+(C^{all}-c)+2s_1+D^{all}+(2t-1)\notag \\
  &=& m+a-c+t-(2s_1-s-3)\label{lm6 1}
\end{eqnarray}

\item At Step 9, if $d\geq 2$, we have
\begin{equation}\label{lm6 2}
  f(e^{l,even}_{d,2z_d})\geq m-c-3\hbox{ and }  f(e^{l,even}_{d-1,2z_{d-1}})\geq m-c-z_d-3.
\end{equation}

\item At Step 10, if $s$ is even, then 
          \begin{equation}\label{lm6 3}
            f(e_1)=f(e^{r,odd}_{a,1})-D^{even}_{d}-1,
          \end{equation}
and, by the order we labeled the edges $e^{r,odd}_{a,1}$, $e_{s-1}$, and $e^{l,odd}_{c,2w_c}$, we have
\begin{equation}\label{lm6 4}
  f(e^{r,odd}_{a,1})>f(e_{s-1})>f(e^{l,odd}_{c,2w_c}).
\end{equation}
Moreover, we have 
\begin{equation}\label{lm6 5}
 f(e^{r,odd}_{a,1})=\left\{
                      \begin{array}{ll}
                        m-c-1, & \hbox{if }s=1 \hbox{ or }s\hbox{ is even}. \\
                        m-c-2, & \hbox{otherwise }.
                      \end{array}
                    \right.
\end{equation}

\item At Step 11, if $c\geq 2$, we have
\begin{equation}\label{lm6 6}
  f(e^{l,odd}_{c,2w_c+1})\geq m-2 \hbox{ and } f(e^{l,odd}_{c-1,2w_{c-1}+1})\geq m-3.
\end{equation}

\item At Step 12, if $s$ is odd, $f(e_1)=f(e^{r,odd}_{a,1})+c+1$. With Equality (\ref{lm6 3}), we have
\begin{equation}\label{lm6 7}
  f(e_1)=\left\{
           \begin{array}{ll}
            f(e^{r,odd}_{a,1})+c+1, & \hbox{if $s$ is odd.} \\
            f(e^{r,odd}_{a,1})-D^{even}_{d}-1, & \hbox{if $s$ is even.}
           \end{array}
         \right.
\end{equation}

\end{itemize}

\noindent{\bf Claim:} $\varphi_f(v_r)>\varphi_f(u)$ for any $u\in V_2$.

Let $u_2$ be the vertex of the largest vertex sum in $V_2$.
Then, we have
\[u_2=\left\{
                                             \begin{array}{ll}
                         v_{s}, & \hbox{if $s>1$.} \\
                         v^{l,odd}_{c,2w_c+1}, & \hbox{if $s=1,c>0$.}\\
                         v^{r,odd}_{a,1}, & \hbox{if $s=1, c=0$.}
                                             \end{array}
                                            \right.\]
By Inequality (\ref{lm6 4}) and $f(e_s)=m>f(e^{l,odd}_{c,2w_c+1})>f(e^{r,odd}_{a,2})$,
the vertex sum at $v_r$ is 
\begin{eqnarray*}
  \varphi_f(v_r) &= &\sum_{i=1}^{a-1}f(e^{r,odd}_{i,1})+f(e^{r,odd}_{a,1})+f(e_s)>f(e^{r,odd}_{a,1})+f(e_s)\\
                 &> &\left\{
         \begin{array}{ll}
         f(e_{s-1})+f(e_s)=\varphi_f(v_{s}), & \hbox{if $s>1$;} \\
   f(e^{l,odd}_{c,2w_c})+f(e^{l,odd}_{c,2w_c+1})=\varphi_f(v^{l,odd}_{c,2w_c+1}), & \hbox{if $s=1,c>0$;}\\
   f(e^{r,odd}_{a,1})+f(e^{r,odd}_{a,2})=\varphi_f(v^{r,odd}_{a,1}), & \hbox{if $s=1,c=0$;}\end{array}\right.  \\
                 &= &\varphi_f(u_2). 
\end{eqnarray*}

\noindent{\bf Claim:} $\varphi_f(v_l)>\varphi_f(v_r)$.

Recall that $\deg(v_l)>\deg (v_r)\ge 3$, 
and for any $e\in E(v_l)$, we have $f(e)>f(e^{r,odd}_{i,1})$ for $1\leq i\leq a-1$. 
Thus, if we can find three edges in $E(v_l)$ such that the sum of the labels is 
not less than the sum of of the maximal two labels of the edges in $E(v_r)$, namely $f(e_s)+f(e^{r,odd}_{a,1})$, 
then we are done.
Recall that $\deg(v_l)=c+d+t+1$. The choice of the three edges in $E(v_l)$ 
depends on the values of $c$, $d$, and $t$:
\begin{description}
  \item{Case 1.} $c\geq 2$

	By Inequality (\ref{lm6 6}) and Equality (\ref{lm6 7}), and $m\geq D^{even}_d+z_d+7$, 
\begin{eqnarray*}
f(e^{l,odd}_{c,2w_c+1})+f(e^{l,odd}_{c-1,2w_{c-1}+1})+f(e_1)
   & \geq & (m-2)+(m-3)\\
	&&+(f(e^{r,odd}_{a,1})-D^{even}_d-1)\\
   &=& (m+f(e^{r,odd}_{a,1}))+(m-D^{even}_d-6) \\
   &>& f(e_s)+f(e^{r,odd}_{a,1}).
\end{eqnarray*}
 
  \item{Case. 2} $c=1$
\begin{description}
  \item{Subcase 2.1.} $d\geq 1$. 
	
	By Inequalities (\ref{lm6 2}) and (\ref{lm6 6}), and Equality (\ref{lm6 7}),
\begin{eqnarray*}
    f(e^{l,odd}_{c,2w_c+1})+f(e^{l,even}_{d,2z_{d}})+f(e_1)
   &\geq& (m-2)+(m-c-3)\\
	&&+(f(e^{r,odd}_{a,1})-D^{even}_d-1) \\
   &=& (m+f(e^{r,odd}_{a,1}))+(m-D^{even}_d-7) \\
  &>& f(e_s)+f(e^{r,odd}_{a,1}).
\end{eqnarray*}
\item{Subcase 2.2.} $d=0$. 

By Inequality (\ref{lm6 6}) and Equality (\ref{lm6 7}),
\begin{eqnarray*}
  f(e^{l,odd}_{c,2w_c+1})+f(e^t)+f(e_1) 
   &\geq& (m-2)+(A^{odd}_a-1+C^{odd}_c-c+s_1+D^{odd}_d+t) \\
    &&   +(f(e^{r,odd}_{a,1})-D^{even}_d-1)\\ 
	&=& (m+f(e^{r,odd}_{a,1}))+(A^{odd}_a+C^{odd}_c+s_1+t-5) \\
   &>& f(e_s)+f(e^{r,odd}_{a,1}).
\end{eqnarray*}

\end{description}

\item{Case 3.} $c=0$.
\begin{description}

 \item{Subcase 2.1.} $d\geq 2$.

 By Inequality (\ref{lm6 2}) and Equality (\ref{lm6 7}),
\begin{eqnarray*}
    f(e^{l,even}_{d,2z_d})+f(e^{l,even}_{d-1,2z_{d-1}})+f(e_1) 
   &\geq& (m-3)+(m-z_d-3)\\
	&&+(f(e^{r,odd}_{a,1})-D^{even}_d-1) \\
  &=& (m+f(e^{r,odd}_{a,1}))+(m-D^{even}_d-z_d-7)\\
   &\ge& f(e_s)+f(e^{r,odd}_{a,1}).
\end{eqnarray*}
  \item{Subcase 2.2.} $d=1$.
	
	Then we have  $t\geq 2$. By Inequality (\ref{lm6 2}), Equality (\ref{lm6 7}), and $A^{odd}_a\geq 3$, 
\begin{eqnarray*}
    f(e^{l,even}_{d,2z_d})+f(e^t)+f(e_1) 
   &\geq& (m-3)+(A^{odd}_a-1+C^{odd}_c-c+s_1+D^{odd}_d+t)\\
	 & &+(f(e^{r,odd}_{a,1})-D^{even}_d-1) \\
   &\geq& (m+f(e^{r,odd}_{a,1}))+(A^{odd}_a+s_1+t-5) \\
   &\geq& f(e_s)+f(e^{r,odd}_{a,1}).
\end{eqnarray*}
  \item{Subcase 2.3.} $d=0$
	
	Then $t\geq 3$. If $s$ is odd, by Equalities (\ref{lm6 1}) and (\ref{lm6 7}), and $a\geq 2$,
\begin{eqnarray*}
  f(e^t)+f(e^{t-1})+f(e_1)  
   &\geq& m+t+a-c-6+(f(e^{r,odd}_{a,1})+c+1) \\
   &=& (m+f(e^{r,odd}_{a,1}))+(t+a-5) \\
   &\geq& f(e_s)+f(e^{r,odd}_{a,1}).
\end{eqnarray*}
If $s$ is even, by Equality (\ref{lm6 1}), 
\begin{eqnarray*}
   f(e^t)+f(e^{t-1})+f(e_1) 
   &\geq& m+t+a-c-5+(f(e^{r,odd}_{a,1})-D^{even}_d-1) \\
  &=& (m+f(e^{r,odd}_{a,1}))+(t+a-6)\\
	&\ge &f(e_s)+f(e^{r,odd}_{a,1})+(t+a-6).
\end{eqnarray*}
The quantity $t+a-6$ in the above inequality is negative only if $t=3$ and $a=2$. However,  
 we have $f(e^1)\ge (A^{odd}_a-1)+1 \geq 3$  and $f(e^{r,odd}_{1,1})=1$. So
\begin{eqnarray*}
  \varphi_f(v_l) &=& f(e^1)+f(e^{t})+f(e^{t-1})+f(e_1) \\
   &\geq& 3+(m+f(e^{r,odd}_{a,1}))+(t+a-6) \\
   &>&1+f(e_s)+f(e^{r,odd}_{a,1})\\
   &=&f(e^{r,odd}_{1,1})+f(e_s)+f(e^{r,odd}_{a,1})\\
   &=&\varphi_f(v_r).
\end{eqnarray*}

\end{description}

\end{description}

\subsection{Rest of the Proof of of Lemma \ref{R has even path}. }

We make some observations.
\begin{itemize}

\item From Step 1, Step 8, and Step 9, we have  
 \begin{equation}\label{lm9b18}
 f(e^i)>f(e^{r,even}_{i,1})\hbox{ for }i\in[\alpha], \hbox{ and } f(e^i)>f(e^{r,odd}_{i',1})\hbox{ for } i\in [\beta, t]\hbox{ and } i'\in[a].
    \end{equation}

\item From Step 16 and Step 18, we have
\begin{equation}\label{lm9b17}
          f(e)>f(e').
        \end{equation}
for $e\in \{e^{l,odd}_{i,2w_i+1},i\in [c]\}\cup \{e^{l,even}_{i,2z_i}, i\in [d]\}$ and $e'\in E(v_r)\setminus \{e_s\}$.

\item At Step 9, if $t\geq 2$,  we have
\begin{equation}\label{lm9b11}
  f(e^t)=B^{even}_{b}+A^{odd}_a+C^{odd}_c-c+s_1+D^{odd}_{d}+t,\hbox{ if }t\geq 1,
\end{equation}
 and
\begin{equation}\label{lm9b12}
 f(e^{t-1})=B^{even}_{b}+A^{odd}_a+C^{odd}_c-c+s_1+D^{odd}_{d}+t-1,\hbox{ if }t\geq 2.
\end{equation}

\item At Step 14, if $s$ is even, then  we have
\begin{equation}\label{lm9b13}
  f(e_1)=m-y_b-(\alpha-\beta)-c-D^{even}_d-1.
\end{equation}

\item At Step 15, after labeling $e^{r,even}_{b,1}$, we have
\begin{equation}\label{lm9b14}
  f(e^{r,even}_{b,1})=m-D^{even}_d-(y_b-1)-(\alpha-\beta)-c-\left\{
                                                   \begin{array}{ll}
                                                     1, & \hbox{if }s=1 \hbox{ or }s\hbox{ is even}, \\
                                                     2, & \hbox{if }s\ge 3\hbox{ and is odd }.
                                                   \end{array}
                                                 \right.
\end{equation} 
Moreover, when $s\ge 2$
\begin{equation}\label{lm9b15}
  f(e^{r,even}_{b,1})>f(e_{s-1}).
\end{equation}

\item By the order we labeled edges on $E$, we have
\begin{equation}\label{lm9b16}
  f(e^{r,even}_{b,1})>f(e^{l,odd}_{c,2w_c})>f(e^{r,even}_{\alpha,3})>f(e^{l,even}_{d,2z_d-1})>f(e^{r,even}_{b,2y_b}).
\end{equation}

\end{itemize}

\noindent{\bf Claim:} $\varphi_f(v_r)>\varphi_f(u)$ for any $u\in V_2$.

Let $u_2$ be the vertex in $V_2$ with the largest vertex sum. If $s=1$, then 
\[\varphi_f(u_2)=\left\{
   \begin{array}{ll}
  \varphi_f(v^{l,odd}_{c,2w_c+1})=f(e^{l,odd}_{c,2w_c})+f(e^{l,odd}_{c,2w_c+1}), & \hbox{if $c>0$;} \\
  \varphi_f(v^{r,even}_{\alpha,2})=f(e^{r,even}_{\alpha,3})+f(e^{r,even}_{\alpha,2}), & \hbox{if $c=0,(\alpha-\beta)>0$;} \\
  \varphi_f(v^{l,even}_{d,2z_d})=f(e^{l,even}_{d,2z_d-1})+f(e^{l,even}_{d,2z_d}), & \hbox{if $c=0,(\alpha-\beta)=0, d>0$;} \\
  \varphi_f(v^{r,even}_{b,2y_b-1})=f(e^{r,even}_{b,2y_b})+f(e^{r,even}_{b,2y_b-1}), & \hbox{otherwise.}
                                                          \end{array}
                                                        \right.\]
By Inequality (\ref{lm9b16}) and $f(e_s)=m$, we have \[\varphi_f(v_r)>f(e^{r,even}_{b,1})+f(e_s)>\varphi_f(u_2).\]
If $s\ge 2$, then $u_2=v_s$. By Inequality (\ref{lm9b15}), we have $\varphi_f(v_r)>\varphi_f(u_2)$.

\noindent{\bf Claim:} $\varphi_f(v_l)>\varphi_f(v_r)$.

The idea is similar to that in the proof of Lemma~\ref{R has odd path}. 
We will choose $k+1$ edges in $E(v_l)$ and $k$ edges in $E(v_r)$ 
such that the sum of the labels of the $k+1$ edges in $E(v_l)$ is not less than
the sum of the labels of the $k$ edges in $E(v_r)$.
Moreover, for other edges $e'\in E(v_l)$ and $e''\in E(v_l)$ which are not chosen,  $f(e')>f(e'')$ holds.  
\begin{description}
  \item{Case 1.} $s=1.$ 

If $t\le 1$, by Inequalities (\ref{lm9b18}) and (\ref{lm9b17}), we have $\varphi_f(v_l)>\varphi_f(v_r)$. 

If $t\geq 2$,
by Equalities (\ref{lm9b11}), (\ref{lm9b12}), and (\ref{lm9b14}), we have \[f(e^{t})+f(e^{t-1})=m+a-c+t-2> f(e^{r,even}_{b,1}).\]
With Inequalities (\ref{lm9b18}) and (\ref{lm9b17}), $\varphi_f(v_l)>\varphi_f(v_r)$.

 \item{Case 2.} $s\ge 2.$

If $t=0$, we have either $f(e^{l,odd}_{c,2w_c+1})+f(e_1)>f(e_s)$ or $f(e^{l,even}_{d,2z_d})+f(e_1)>f(e_s)$; and
if $t=1$, by Equalities (\ref{lm9b11}) and (\ref{lm9b13}), $f(e^t)+f(e_1)>f(e_s)$ holds.
With Inequalities (\ref{lm9b18}) and (\ref{lm9b17}), we have $\varphi_f(v_l)>\varphi_f(v_r)$.

For $t\geq 2,$ note $f(e^t)+f(e^{t-1})>f(e^{r,even}_{b,1})$ and if $s$ is odd, then $f(e_1)=m-1$. 
So $f(e^t)+f(e^{t-1})+f(e_1)\geq f(e_s)+f(e^{r,even}_{b,1})$. 
With Inequalities (\ref{lm9b18}) and (\ref{lm9b17}), we have $\varphi_f(v_l)>\varphi_f(v_r)$.
If $s$ is even, we need compare more edges.
First we have $f(e_1)=f(e^{r,even}_{b,1})+1$ and,
by Equalities (\ref{lm9b11}) and  (\ref{lm9b12}), $f(e^t)+f(e^{t-1})=m+a-c+t-3$.

\begin{description}
  \item{Subcase 2.1.} $c\ge 1$. 
	
We compare the sum of the labels of the edges 
$e^{l,odd}_{c,2w_c+1}$, $e^t$, $e^{t-1}$, an $e_1$ in $E(v_l)$
 and the sum of maximal three labels of edges in $E(v_r)$. 
Let 
\[r=\max\{f(e)\mid e\in E(v_r)\setminus \{e_s,e^{r,even}_{b,1}\}\}.\] 
Then $f(e^{l,odd}_{c,2w_c+1})-r>c+3$, and hence 
\[
f(e^{l,odd}_{c,2w_c+1})+f(e^t)+f(e^{t-1})+f(e_1)>m+f(e^{r,even}_{b,1})+r.
\]
With Inequalities (\ref{lm9b18}) and (\ref{lm9b17}), $\varphi_f(v_l)>\varphi_f(v_r)$ holds.

\item{Subcase 2.2.} $c=0$.
	
If $t>3$ or $a\geq 1$, then $m+a-c+t-3\geq m+1$ and $f(e^t)+f(e^{t-1})+f(e_1)\geq f(e_s)+f(e^{r,even}_{b,1})$. 
The remaining cases are $t=2$ and $a=0$, or $t=3$ and $a=0$. 
Note that $a=0$ implies $b\ge 2$, because $\deg(v_r)\ge 3$.
If $d>0$, no matter $t=2$ or $t=3$, 
$f(e^{l,even}_{d,2z_d})-f(e^{r,even}_{b-1,1})>2$. 
So \[f(e^{l,even}_{d,2z_d})+f(e^t)+f(e^{t-1})+f(e_1)>m+f(e^{r,even}_{b,1})+f(e^{r,even}_{b-1,1}).\]
With Inequalities (\ref{lm9b18}) and (\ref{lm9b17}), $\varphi_f(v_l)>\varphi_f(v_r)$.

If $d=0$, by $\deg(v_l)>\deg(v_r)$, we have $t=3$ and $b=2$.  Hence $\beta=1$. Then 
$f(e^{t-2})=f(e^1)=3$ and $f(e^{r,even}_{1,1})=1$. 
Therefore, 
\begin{eqnarray*}
\varphi_f(v_l)&=&f(e^t)+f(e^{t-1})+f(e^{t-2})+f(e_1)\\
              &>&m+f(e^{r,even}_{b,1})+f(e^{r,even}_{b-1,1})\\
							&=&\varphi_f(v_r).
\end{eqnarray*}

\end{description}

\end{description}

\end{document}